\documentclass[11pt,dvips]{article}
\usepackage{latexsym}
\usepackage{amsmath,amsthm,amsfonts,amssymb} 
\usepackage{url}%utile pour les preprints avec url

\usepackage{enumerate}% \begin{enumerate}[cas 1]
\usepackage{mdwlist} % enumerate*
\usepackage{graphicx}
\usepackage[all]{xy}
\usepackage{export}%pour sauvecompteurs

%%%%%%%%%%%%%%%%%%%%%%%%%%%%%%%%%%%%%%%%%%%%%%%%%%%%%%%%%
%  parametres pour faciliter le positionnement des figures (Cf texfaq)
%%%%%%%%%%%%%%%%%%%%%%%%%%%%%%%%%%%%%%%%%%%%%%%%%%%%%%%%%%%

\setcounter{topnumber}{9}
\setcounter{bottomnumber}{9}
\setcounter{totalnumber}{20}
\setcounter{dbltopnumber}{9}

%%%%%%%%%%%%%%%%%%%%%%%%%%%%%%%%%%%%%%%%%%%%%%%%%%%%%%%%%%%%%%%%%%%%%%%%%%%%%%%%%%%%%%%%%
%        Theoremes
%%%%%%%%%%%%%%%%%%%%%%%%%%%%%%%%%%%%%%%%%%%%%%%%%%%%%%%%%%%%%%%%%%%%%%%%%%%%%%%%%%%%%%%%

\newtheorem{thm}{Theorem}[section]

\newtheorem*{thm*}{Theorem}
\newtheorem{dfn}[thm]{Definition} 
\newtheorem*{dfn*}{Definition}

\newtheorem*{cor*}{Corollary}

\newtheorem{prop}[thm]{Proposition} 
\newtheorem*{prop*}{Proposition} 
\newtheorem*{properties*}{Properties} 
 
\newtheorem{lem}[thm]{Lemma} 
\newtheorem*{lem*}{Lemma}

\newtheorem*{claim*}{Claim} 
 
\newtheorem*{fact*}{Fact}

\newtheorem*{qst*}{Question}

\newtheorem*{pb*}{Problem}

\theoremstyle{remark}
\newtheorem*{rem*}{Remark}
\newtheorem{rem}[thm]{Remark}
\newtheorem*{example*}{Example}

%%%%%%%%%%%%%%%%%%%%%%%% sauve/utiliseCompteurs %%%%%%%%%%%%%%%%%%%%%%%%
%utilisation de Sauve/UtiliseCompteurs:
%\begin{SauveCompteurs}{nom_de_fichier}
%\begin{thm}bla bla
%\end{thm}
%\end{SauveCompteurs}
%
%\begin{UtiliseCompteurs}{nom_de_fichier}%
%meme enonce
%\end{UtiliseCompteurs}

\newenvironment{SauveCompteurs}[1]{%
\newcommand{\monparametre}{#1}
\openexport{\monparametre_sauve}
  \Export{thm}\Export{section}\Export{subsection}\Export{subsubsection}
\closeexport}{}

\newenvironment{UtiliseCompteurs}[1]{%
\newcommand{\monparametre}{#1}% utile car en fait #1 n'est pas accessible dans la deuxieme partie de l'environnement.
\openexport{\monparametre_aux}
  \Export{thm}\Export{section}\Export{subsection}\Export{subsubsection}
\closeexport
\PackageInfo{export}{\MessageBreak
Importations from \monparametre_sauve.xpt\MessageBreak}%
\InputIfFileExists{\monparametre_sauve.xpt}{\relax}{\relax}%
\renewcommand{\label}[1]{}%But=eviter les multiply defined. 
}{%
\PackageInfo{export}{\MessageBreak
Importations from \monparametre_aux.xpt\MessageBreak}%
\InputIfFileExists{\monparametre_aux.xpt}{\relax}{\relax}}

%%%%%%%%%%%%%%%%%%%%%%%%%%%%%%%%%%%%%%%%%%%%%%%%%%%%%%%%%%%%%%%%%%%%%%%%%%%%%%%%%%%%%%
%%%%%%%%%%%%%%%%%%%%%%%%%%%%%%%%%%%%%%%%%%%%%%%%%%%%%%%%%%%%%%%%%%%%%%%%%%%%%%%%%%%%%%

%%%%%%%%%%%%%%%%%%%%%%%%%%%%%%%%%%%%%%%%%%%%%%%%%%%%%%%%%%%%%%%%%%%%%%%%%%%%%%%%%%%%%%%%%%%%%%%%%%%%%%%%%%%%%%%%
%
%                            Raccourcis
%
%%%%%%%%%%%%%%%%%%%%%%%%%%%%%%%%%%%%%%%%%%%%%%%%%%%%%%%%%%%%%%%%%%%%%%%%%%%%%%%%%%%%%%%%%%%%%%%%%%%%%%%%%%%%%%%%%%%

% pique dans yhmath de yannis haralambous 
\makeatletter
\edef\@tempa#1#2{\def#1{\mathaccent\string"\noexpand\accentclass@#2 }}
\@tempa\rond{017}
\makeatother

\newcommand{\es}{\emptyset}
\renewcommand{\phi}{\varphi} 
\newcommand{\m} {^{-1}}

\newcommand {\ra} {\rightarrow}

\newcommand {\xra} {\xrightarrow}

\newcommand{\ol}[1]{\overline{#1}}

\renewcommand{\subsetneq}{\varsubsetneq}

\newcommand{\dunion}{\sqcup}% on peut utiliser \sqcup ou  \amalg

 % on peut utiliser \bigsqcup ou \coprod

\newcommand{\ie} {i.~e.\ }

\newcommand {\calb} {{\mathcal {B}}}   
\newcommand {\calc} {{\mathcal {C}}}   
   
\newcommand {\cale} {{\mathcal {E}}}

\newcommand {\calh} {{\mathcal {H}}}

\newcommand {\calo} {{\mathcal {O}}}

\newcommand {\cals} {{\mathcal {S}}}

\newcommand {\calx} {{\mathcal {X}}}

\newcommand {\bbH} {{\mathbb {H}}}

\newcommand {\bbZ} {{\mathbb {Z}}}   

\newcommand{\abs}[1]{\lvert#1\rvert} %recommande par amsldoc

\newcommand{\VPC} {\mathrm{VPC}}
 \newcommand{\ais}{almost invariant subset}
\newcommand{\Seif}{\mathrm{QH}}

\newcommand{\bo}{\partial}
\newcommand{\cobo}{\delta}
\newcommand{\inc}{\subset}
\newcommand {\Z} {{\mathbb {Z}}} 

%\usepackage{pdfsync}

%%%%%%%%%%%%%%%%%%%%%%%%%%%%%%%%%%%%%%%%%%%%%%%%%%%%%%%%%%%%%%%%%%%%%%%%%%%%%%%%%%%%%%%%%%%%%%%%%%%%%%%%%%%%%
%
%              fin du preambule
%
%%%%%%%%%%%%%%%%%%%%%%%%%%%%%%%%%%%%%%%%%%%%%%%%%%%%%%%%%%%%%%%%%%%%%%%%%%%%%%%%%%%%%%%%%%%%%%%%%%%%%%%%%%%%%

%\usepackage{showkeys}
%\usepackage{srcltx}
%\usepackage{pst-ellipse}
%\usepackage[all]{xypic}

\setcounter{tocdepth}{2}

\begin{document}

\title{Scott and Swarup's regular neighbourhood as a tree of cylinders}
\author{Vincent Guirardel, Gilbert Levitt}
%\date{\today.\\ \small Fichier \texttt{\jobname.tex}}

\maketitle

\begin{abstract}   Let $G$ be a finitely presented group.  Scott and Swarup have constructed a canonical splitting  of $G$  which encloses all almost invariant sets over
virtually polycyclic subgroups of a given length.
We give an alternative construction of  this regular neighbourhood,  by showing that it is 
  the tree of cylinders of a JSJ splitting.
\end{abstract}

%{\small
%\tableofcontents
%}

\section{Introduction}

Scott and Swarup have constructed in \cite{ScSw_regular} a canonical graph of groups decomposition (or splitting) of a finitely
presented group $G$, which {encloses} all almost invariant sets over virtually polycyclic subgroups of a given length $n$
($\VPC_n$ groups), in particular over virtually cyclic subgroups for $n=1$.

\emph{Almost invariant sets} generalize splittings: a splitting is analogous to an \emph{embedded} codimension-one submanifold of
a manifold $M$, while an almost invariant set is analogous to an \emph{immersed} codimension-one submanifold.   

Two splittings are \emph{compatible} if they have a common refinement, in the sense that both can be obtained from   the refinement
by collapsing some edges.
For example, two splittings induced by   disjoint embedded codimension-one submanifolds are compatible.

\emph{Enclosing} is a generalisation of this notion to almost invariant sets:  in the analogy above,
given two codimension-one submanifolds $F_1,F_2$ of $M$ with $F_1$ immersed and $F_2$ embedded, $F_1$ is enclosed in a 
connected component of $M\setminus F_2$ if one can  isotope $F_1$ into this component.

Scott and Swarup's construction is called the \emph{regular neighbourhood}   of all almost invariant
sets over $\VPC_n$ subgroups. This is analogous to the topological regular neighbourhood of a finite union of (non-disjoint)
immersed codimension-one submanifolds: it defines a splitting which encloses the initial submanifolds.

One main virtue of their splitting is the fact that it is canonical: it is invariant under automorphisms of $G$. Because of this,
it is often quite different from usual JSJ splittings, which are unique only up to  deformation:
the canonical object is the JSJ deformation space \cite{For_uniqueness,GL3}.

 The main reason for this rigidity is that the regular neighbourhood is defined in terms of enclosing.
Enclosing, like  compatibility of splittings, is more rigid than  \emph{domination}, which is the basis for
usual JSJ theory. For instance, any
 two splittings in Culler-Vogtmann's outer space dominate each other, but they are compatible if and only if they lie in a common simplex. % (see Subsection \ref{subsec_trees} for definitions).

  On the other hand, we have described a general construction producing a canonical splitting $T_c$ from a   canonical deformation space: 
the \emph{tree of cylinders} \cite{GL4}. 
It also  enjoys strong compatibility properties.
In the present paper we show that   the splitting constructed by Scott and Swarup is a subdivision of the tree of cylinders of the usual JSJ deformation space.

More precisely, let $T_J$ be the Bass-Serre tree of a JSJ splitting of $G$ over $\VPC_n$ groups,  as constructed for instance in
\cite{DuSa_JSJ}.
 To construct the tree of cylinders, 
say that two edges are in the same cylinder if their stabilizers are commensurable. Cylinders are
subtrees, and  the tree  $T_c$ dual to the covering of $T_J$ by cylinders is \emph{the tree of cylinders} of $T_J$
(see \cite{GL4}, or Subsection \ref{sec_constr} below).

\begin{UtiliseCompteurs}{thm_SS}
  \begin{thm}\label{thm_SS}
    Let $G$ be a finitely presented group, and $n\ge1$.
  Assume that $G$ does not split
    over a $\VPC_{n-1}$ subgroup, and that $G$ is not $\VPC_{n+1}$.  Let $T_J$ be a JSJ tree of $G$ over $\VPC_n$ subgroups, and let $T_c$ be
    its tree of cylinders for the commensurability relation.

    Then  the Bass-Serre tree of Scott and Swarup's regular neighbourhood of all almost
    invariant subsets over $\VPC_n$ subgroups  is  equivariantly isomorphic to a subdivision of $T_c$.
  \end{thm}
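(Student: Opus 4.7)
The plan is to construct a canonical equivariant isomorphism between the Bass--Serre tree $T_{SS}$ of Scott--Swarup's regular neighbourhood and a subdivision of $T_c$, by matching their bipartite structures. Both trees are naturally bipartite. The tree of cylinders $T_c$ has, on one side, cylinder vertices (one per commensurability class of $\VPC_n$ edge stabilizers of $T_J$) and, on the other side, vertices of $T_J$ lying in at least two distinct cylinders; edges join each cylinder $Y$ to the vertices of $T_J$ it contains. Scott--Swarup's tree has enclosing vertices $V_0$ (one per cross-connected component of $\VPC_n$ almost invariant sets) on one side and complementary vertices $V_1$ on the other.

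\medskip

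The central step is to set up a canonical equivariant bijection between cylinders of $T_J$ and enclosing vertices of Scott--Swarup. Each cylinder is labelled by the commensurability class of its edge stabilizers, and each enclosing vertex by the commensurability class of the stabilizers of the almost invariant sets it encloses. The hypotheses that $G$ does not split over $\VPC_{n-1}$ and is not $\VPC_{n+1}$ should imply, via standard JSJ theory, that every $\VPC_n$ almost invariant set of $G$ is essentially equivalent to one coming from an edge of $T_J$, and that two such sets are equivalent (in the sense used to form cross-connected components) exactly when their stabilizers are commensurable. This yields the required bijection of commensurability classes, and hence of vertices. Moreover, the stabilizers agree on both sides: both are equal to the commensurator of the associated $\VPC_n$ subgroup, by \cite{GL4} for cylinder vertices and by Scott--Swarup's own description for enclosing vertices.

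\medskip

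It then remains to match the complementary $V_1$ vertices with the vertices of $T_J$ lying in at least two cylinders, to check incidence, and to verify stabilizers. The main obstacle, I expect, is the fine identification of Scott--Swarup's complementary vertex stabilizers, which are defined combinatorially as intersections of complementary regions of the almost invariant sets, with the relevant non-cylinder vertex stabilizers of $T_J$; this should follow from enclosing properties of the JSJ combined with the characterization of its vertex stabilizers, but the bookkeeping is delicate. The subdivision in the statement reflects the combinatorial discrepancy that Scott--Swarup may insert an extra vertex along an edge where $T_c$ records a single step, typically along edges incident to a cylinder containing data that $T_c$ absorbs into the cylinder but that $T_{SS}$ prefers to keep separate. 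Once all the data match, equivariance of the isomorphism is automatic since both constructions are intrinsic to $G$.
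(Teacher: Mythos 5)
Your proposed central step—a canonical bijection between cylinders of $T_J$ and cross-connected components (your ``enclosing vertices'')—does not hold, and this is the crux of the gap. A single cylinder $Y$ generally supports \emph{several} cross-connected components, not one: there is a non-peripheral CCC $C_Y$ when $|\partial Y|\ge 4$, plus one peripheral CCC $C_{v,Y}$ for each boundary vertex $v$ of finite valence in $Y$; when $|\partial Y|=3$ there is \emph{no} non-peripheral CCC, only three peripheral ones. So the map from CCCs to commensurability classes is many-to-one, and the correct target of a CCC is a \emph{point of $T_c$}, not a cylinder vertex: non-peripheral CCCs go to $V_1(T_c)$, peripheral CCCs go to midpoints of edges of $T_c$, which is precisely where the subdivision enters (not some vague ``combinatorial discrepancy''). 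Your sentence claiming that two $\VPC_n$ almost invariant sets are ``equivalent (in the sense used to form cross-connected components) exactly when their stabilizers are commensurable'' is also false: commensurability of stabilizers is necessary for crossing but very far from sufficient for belonging to the same CCC (peripheral vs.\ non-peripheral CCCs over the same cylinder are the standard counterexample), and ``equivalence'' of almost invariant sets (small symmetric difference) is yet a third, unrelated notion.

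A second omission: the quadratically hanging vertices. Theorem~\ref{thm_ai} does \emph{not} say every $\VPC_n$ almost invariant set comes from an edge of $T_J$; it says $[X]\in\calb(T_J)\cup\Seif(T_J)$. Sets in $\Seif_v(T_J)$ arise from immersed curves in a QH orbifold, cross each other strongly, and form CCCs that are mapped to the $V_0(T_c)$ vertex $v_c$, not to any cylinder. Your bipartite-matching scheme has no slot for them. The actual proof proceeds in two stages you would need to replicate: first a general theorem (\ref{thm_RN}/\ref{thm_RN2}) identifying $RN(\calb(T)\cup\Seif(T))$ with a subdivision of $T_c$ via a map $\Phi$ on CCCs that is proved to preserve betweenness (this is where the real combinatorics lives, through the lemmas on special forests and peripherality), and second the Dunwoody--Swenson/Scott--Swarup input (Theorem~\ref{thm_ai}) that every $\VPC_n$ almost invariant set is in $\calb(T_J)\cup\Seif(T_J)$, so that the Scott--Swarup regular neighbourhood of \emph{all} such sets coincides with $RN(\calb(T_J)\cup\Seif(T_J))$. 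Your sketch gestures at neither of these, and what it does assert about the structure of CCCs is not correct.
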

\end{UtiliseCompteurs}

In particular, this gives a new proof of the fact that this regular neighbourhood is a   tree.
Deriving the regular neighbourhood from a JSJ splitting, instead of building it from an abstract betweenness relation, seems to
greatly simplify the construction; in particular this completely avoids the notion of good or good-enough position for \ais{s}.
\\

There are two ingredients in our approach, to be found in Sections \ref{sec_reg} and \ref{sec_sco} respectively (Section
\ref{sec_prel} recalls basic material about trees of cylinders, almost invariant sets, cross-connected components, regular
neighborhoods).
 
 The first ingredient is a general fact about almost invariant sets  that are \emph{based} on a given tree $T$. 
Consider any simplicial tree $T$ with an action of $G$. 
Any edge $e$ separates $T$
into two half-trees, and this defines  almost invariant sets $Z_e$ and $Z_e^*$ (see Subsection \ref{sec_bas}). 
The collection $\calb(T)$ of %equivalence classes of
almost invariant subsets \emph{based} on $T$ is then defined by taking Boolean combinations of such sets $Z_e$.

 Following Scott-Swarup, one defines cross-connected components of $\calb(T)$ by
using \emph{crossing} of almost invariant sets.
The set of cross-connected components is then endowed with a {betweenness} relation which allows one to constructs 
a bipartite graph $RN(\calb(T))$ associated to $\calb(T)$.
This is the \emph{regular neighborhood} of $\calb(T)$ (see Definition \ref{dfn_RN}).

\begin{UtiliseCompteurs}{thm_RN}
  \begin{thm} 
\label{thm_RN} Let $G$ be a finitely generated group, and $T$ a tree with a minimal action of $G$.
Assume that 
 no two groups commensurable to edge stabilizers  are contained in each other with infinite index.
    % Let $\cale$ be as in Example \ref{sec_commens}, with groups in $\cale$ finitely generated. \coucou{n\'ecessaire ?}

    Then the regular neighbourhood 
$RN(\calb(T))$ is  equivariantly  isomorphic to a subdivision of $T_c$, the tree of cylinders of $T$ for the commensurability relation; in particular, $RN(\calb(T))$ is a tree.
  \end{thm}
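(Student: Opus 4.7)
The strategy is to identify the cross-connected components of $\calb(T)$ with the cylinders of $T$, and then to match the bipartite incidence structure of $RN(\calb(T))$ with that of $T_c$, allowing for a possible subdivision.

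The first step is to analyze when two basic sets $Z_e, Z_f$ cross. Two edges of $T$ partition $T$ into three components, so at the tree level one of the four intersections $Z_e^{(\eps)} \cap Z_f^{(\delta)}$ is empty; whether $Z_e$ and $Z_f$ actually cross as almost invariant sets depends on whether $G$-translates of the basepoint populate this empty quadrant. The central lemma I would prove is that $Z_e$ crosses $Z_f$ if and only if the stabilizers $G_e$ and $G_f$ are commensurable, i.e., $e$ and $f$ lie in the same cylinder of $T$. The hypothesis that no two commensurable edge stabilizers are contained in each other with infinite index precisely rules out the degenerate situations in which commensurable stabilizers fail to cross, or non-commensurable ones do. Once this is established, the cross-connected components among $\{Z_e\}$ are in bijection with the cylinders of $T$; I would extend this to all of $\calb(T)$ by expressing each element as a Boolean combination of $Z_e$'s and checking that such a combination belongs to the cross-connected component determined by the edges it involves.

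The final step is to match the incidence structure of $RN(\calb(T))$ with that of $T_c$. A vertex $v$ of $T$ lying in several cylinders $Y_1, Y_2, \dots$ should give rise to a vertex of $RN$ adjacent to each of the corresponding cross-connected components, mirroring the bipartite edge structure of $T_c$. I expect the main obstacle will be verifying that the combinatorial definition of $RN$ (via betweenness of cross-connected components) produces exactly this bipartite graph \emph{up to subdivision}: the definition of $RN$ may introduce additional bivalent vertices that subdivide edges of $T_c$, or distinguish different ``sides'' of a vertex of $T$ lying in several cylinders, and one must carefully relate the betweenness relation on cross-connected components to the combinatorial structure of $T$. As a byproduct of this matching, the fact that $RN(\calb(T))$ is a tree, rather than merely a graph, falls out from the tree structure of $T_c$.
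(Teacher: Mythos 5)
Your proposed central lemma --- that $Z_e$ crosses $Z_f$ if and only if $e$ and $f$ lie in the same cylinder --- is false in both directions, and the ``only if'' direction would be fatal. For \emph{any} two distinct edges $e,f$ of a tree, removing the open edges $\rond e, \rond f$ disconnects $T$ into three pieces, so one of the four subtrees $T_e^{(*)}\cap T_f^{(*)}$ is empty; consequently one of the four intersections $Z_e^{(*)}\cap Z_f^{(*)}$ is empty, hence small, and $Z_e$ \emph{never} crosses $Z_f$. (This is why the empty quadrant is ``empty at the tree level'' in a sense that directly transfers to $G$: one of the sets $Z_e^{(*)}\cap Z_f^{(*)}$ is literally $\emptyset$, not merely $H$-finite.) The cross-connected components of the bare collection $\{[Z_e]\}$ are therefore all singletons and have nothing to do with cylinders. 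Crossing in $\calb(T)$ only occurs between genuine Boolean combinations of the $Z_e$'s, so there is no way to ``bootstrap'' from a bijection between crossing classes of $Z_e$'s and cylinders.

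What the paper does instead is represent elements of $\calb(T)$ by \emph{special forests}: subforests $S\subset T$ whose coboundary $\cobo S$ is a finite set of edges all contained in a single cylinder $Y$. For these, crossing is controlled by Lemma \ref{lem_prelim2_ai}: crossing forces the underlying forests to be based on the \emph{same} cylinder (the correct one-sided implication), and smallness of $X_S\cap X_{S'}$ corresponds to finiteness of $S\cap S'$. The analysis of cross-connected components within one cylinder (Subsection \ref{sec_per}) then shows there is \emph{not} one CCC per cylinder: each boundary vertex $v\in\partial Y$ contributes a singleton \emph{peripheral} CCC $C_{v,Y}$, and there is at most one additional \emph{non-peripheral} CCC $C_Y$, which exists iff $|\partial Y|\ne 2,3$. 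This peripheral/non-peripheral dichotomy is not a technicality you can defer to ``the final step'': it is exactly the source of the subdivision (peripheral CCC's map to midpoints of edges of $T_c$), and without it your map from CCC's to cylinders is not even well-defined. You also need to realize the star vertices of $RN$ as vertices of $V_0(T_c)$ via a betweenness argument, which in the paper is done through Lemmas \ref{lem_between1} and \ref{lem_between2} and the map $\Phi$. I would suggest abandoning the $Z_e$-crossing lemma, adopting the special-forest representation, and then working out which forests give nontrivial almost invariant sets (Lemma \ref{lem_prelim_ai}) and how crossing and inclusion translate at the level of forests.
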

\end{UtiliseCompteurs}

The hypothesis about edge stabilizers holds in particular if all edge stabilizers  of $T$ are $\VPC_n$ for a fixed $n$.

This theorem remains true if one enlarges $\calb(T)$ to $\calb(T)\cup QH(T)$, by including almost invariant sets enclosed by
quadratically hanging vertices of $T$. Geometrically, such a vertex is associated to a fiber bundle over a 2-dimensional orbifold
$\calo$. Any simple closed curve on $\calo$ gives a way to blow up $T$ by creating new edges and therefore new almost invariant
sets. These sets are in $QH(T)$, as well as those associated to immersed curves on $\calo$. Under the same hypotheses as Theorem
\ref{thm_RN}, we show (Theorem \ref{thm_RN2}) that the regular neighbourhood $RN(\calb(T)\cup QH(T))$ also is a subdivision of $T_c$.
\\

The second ingredient, specific to the $\VPC_n$ case, is due to (but not explicitly stated by) Scott and Swarup 
\cite{ScSw_regular}.
We believe it is worth emphasizing this statement, as 
it gives a very useful description   of  almost invariant sets  over $\VPC_n$ subgroups
in terms of a JSJ splitting $T_J$:  
in plain words, it says that 
any  almost invariant set  over a $\VPC_n$ subgroup
is either dual to a curve in a QH subgroup, or is a Boolean combination of 
almost invariant sets dual to half-trees  of $T_J$.

\begin{UtiliseCompteurs}{thm_ai}
  \begin{thm}[{\cite{DuSw_algebraic},\cite[Th. 8.2]{ScSw_regular}}]\label{thm_ai}
    Let $G$ and $T_J$ be as in Theorem \ref{thm_SS}.
    % Let $T$ be a JSJ tree over $\bar\cale$.
    For any \ais\ $X$ over a $\VPC_n$ subgroup, the equivalence class $[X]$ belongs to $ \calb(T_J)\cup\Seif(T_J)$.
  \end{thm}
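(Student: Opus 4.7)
The plan is to deduce the theorem from the algebraic torus theorem of Dunwoody--Swenson \cite{DuSw_algebraic}, combined with the universal properties of the JSJ splitting $T_J$. The target dichotomy --- $[X]$ is either a Boolean combination of half-trees of $T_J$, or lives in a QH piece of $T_J$ --- matches the main dichotomy one extracts from Dunwoody--Swenson once the JSJ is in hand.

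I would begin by fixing a representative $X$ of the equivalence class with stabilizer equal to a $\VPC_n$ subgroup $H$, and analyze the crossing behavior of $X$ against the half-tree sets $Z_e$ as $e$ ranges over edges of $T_J$. The hypotheses that $G$ does not split over any $\VPC_{n-1}$ subgroup and that $G$ is not $\VPC_{n+1}$ are exactly what is needed to guarantee the usual regularity statements: edge stabilizers of $T_J$ have well-behaved commensurators, and any weak crossing between $X$ and some $Z_e$ is in fact forced to be strong. If no strong crossing occurs, a direct Bass--Serre argument applies: for each edge $e$ of $T_J$, $X$ is ``almost contained'' in one of the two half-trees, and collecting this data over a finite $H$-invariant subtree (obtained from finite presentability of $G$ and the $\VPC_n$ stabilizer of $X$) produces a finite Boolean combination of $Z_e$'s equivalent to $X$, so $[X]\in\calb(T_J)$.

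If on the other hand $X$ strongly crosses some $Z_e$, one invokes the algebraic torus theorem: the stabilizers of $X$ and of $e$ together generate a subgroup which is virtually the fundamental group of a compact $2$--orbifold, and $X$ is dual to an essential (possibly immersed) curve on this orbifold. By the canonical structure of QH vertices in a $\VPC_n$ JSJ splitting, this orbifold subgroup must be conjugate into the stabilizer of a QH vertex of $T_J$, and the curve representation of $X$ exhibits $[X]\in\Seif(T_J)$.

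The main obstacle is the crossing case. The purely combinatorial Bass--Serre argument of the non-crossing case is routine once weak crossings have been ruled out, but the crossing case requires the full strength of Dunwoody--Swenson, which itself rests on a delicate analysis of patterns of tracks on finite $2$--complexes, and on matching the $2$--orbifold subgroup produced there with a QH vertex of $T_J$ via the canonicity of the QH part of the $\VPC_n$ JSJ deformation space. Making this matching precise (and in particular ruling out that the orbifold subgroup spreads across several vertices of $T_J$) is where I expect the real work to lie.
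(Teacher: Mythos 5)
Your proposal reproduces the paper's broad shape---a dichotomy whose non-crossing branch lands in $\calb(T_J)$ and whose crossing branch, via Dunwoody--Swenson, lands in $\Seif(T_J)$---but the dichotomy is placed on the wrong quantity, and this is a genuine gap. You split according to whether $X$ strongly crosses some half-tree set $Z_e$ of $T_J$, whereas the paper splits according to whether $X$ strongly crosses \emph{any} almost invariant set over a $\VPC_n$ subgroup. These conditions are not equivalent, and the difference is exactly the QH case you must capture. Take a QH vertex $v$ with orbifold $\calo$ and a non-peripheral simple closed curve $\gamma$ on $\calo$ disjoint from the boundary of $\calo$. The associated almost invariant set $X_\gamma$ lies in $\Seif_v(T_J)$ but not in $\calb(T_J)$: refining $T_J$ along $\gamma$ changes the deformation space (the group of $\gamma$ is not universally elliptic, being crossed by a transversal curve), and by Proposition~\ref{prop_bool} the refinement has strictly larger $\calb$, so $X_\gamma$ is not a Boolean combination of half-trees of $T_J$. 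Yet $X_\gamma$ strongly crosses \emph{no} $Z_e$: all edges of $T_J$ incident to $v$ correspond to peripheral subgroups, i.e.\ to boundary components of $\calo$, which $\gamma$ avoids. Your non-crossing branch would therefore wrongly try to conclude $[X_\gamma]\in\calb(T_J)$. The rescue you propose---that under the standing non-splitting hypotheses every weak crossing of $X$ with a $Z_e$ must actually be strong---is false and not implied by those hypotheses; disjoint curves (such as $\gamma$ and a boundary component) are precisely the standard examples of weak-but-not-strong crossing.

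The misplacement also undermines the non-crossing branch internally. Even when $X$ crosses no $Z_e$ at all, nestedness with every half-tree only shows that $X$ is enclosed in some vertex $v$ of $T_J$; it does not show that $X\cap G_v$ is a trivial almost invariant subset of $G_v$. The paper rules this out using universal ellipticity of the stabilizer of $X$, which in turn is established because $X$ strongly crosses no almost invariant set whatsoever---not merely no $Z_e$---so your weaker hypothesis does not supply the needed tool. Finally, the case where $X$ weakly but not strongly crosses some $Z_e$'s still requires an argument (the paper writes $X$ as a finite union of Boolean pieces each crossing no $Z_e$); once one stops pretending weak crossings are strong, this step cannot be skipped.
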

\end{UtiliseCompteurs}

 This theorem is essentially another point of view on the proof of
Theorem 8.2 in \cite{ScSw_regular} (see \cite{ScSw_erratum_regular} for corrections), and makes a crucial use of algebraic torus
theorems \cite{DuSw_algebraic,DuRo_splittings}. We give a proof in Section \ref{sec_sco}.

Theorem \ref{thm_SS} is a direct consequence of Theorems \ref{thm_ai} and \ref{thm_RN2}.

\section{Preliminaries}\label{sec_prel}

In this paper, $G$ will be a fixed finitely generated group. In Section \ref{sec_sco}  it will have to be finitely
presented.
  
\subsection {Trees}
\label{subsec_trees}

 If $\Gamma$ is a graph, we denote by $V(\Gamma)$ its set of vertices and by $E(\Gamma)$ its set of (closed) non-oriented edges. 

A tree  always means a simplicial tree $T$ on which $G$ acts without inversions.   Given a family $\cale$ of subgroups of
$G$, an
\emph{$\cale$-tree} is a tree whose edge stabilizers belong to $\cale$.  We denote by $G_v$ or $G_e$ the
stabilizer of a vertex
$v$ or an edge $e$.

Given a subtree $A$, we denote by $pr_A$ the projection onto $A$, mapping $x$ to the point of $A$ closest to $x$.
If $A$ and $B$ are disjoint, or intersect in at most one point, then $pr_A(B)$ is a single point, and we define
the \emph{bridge} between $A$ and $B$ as the segment joining $pr_A(B)$ to $pr_B(A)$.

A tree $T$ is \emph{non-trivial} if there is no global fixed point,
\emph{minimal} if there is no proper $G$-invariant subtree.

An element or a subgroup of $G$ is \emph{elliptic} in $T$ if it has a global fixed point. An element   which is not elliptic
is \emph{hyperbolic}. It has an axis   on which it acts as a translation. 
 If $T$ is minimal, then it is the union of all translation axes of elements of $G$.
 In particular, if $Y\subset T$ is a subtree, then any connected
component of $T\setminus Y$ is unbounded.  

A subgroup $A$ consisting only of elliptic elements fixes a
point if it is finitely generated, a point or an end in general. If a finitely generated  subgroup $A$ is not elliptic, there is 
a unique minimal $A$-invariant subtree.

A tree $T$ \emph{dominates} a tree $T'$ if there is an equivariant map $f:T\to T'$. Equivalently, any subgroup which is
elliptic in
$T$ is also elliptic in $T'$. Having the same elliptic subgroups is an equivalence relation on the set   of trees, the
equivalence classes are called \emph{deformation spaces} (see  \cite{For_deformation,GL1} for more details).

\subsection{Trees of cylinders }\label{sec_constr}

Two subgroups $A$ and $B$ of $G$ are \emph{commensurable} if $A\cap B$ has finite index in both $A$ and $B$. 

\begin{dfn} \label{dfn_ce}
We fix  a conjugacy-invariant family $\cale$ of subgroups of $G$ such that:
\begin{itemize}
\item any subgroup $A$ commensurable with some $B\in\cale$ lies in $\cale$;
\item if $A,B\in \cale$ are such that $A\subset B$, then $[B:A]<\infty$.
\end{itemize}
 An $\cale$-tree is a tree whose edge stabilizers belong to $\cale$.
\end{dfn}

For instance, $\cale$ may consist of all subgroups of $G$ which are virtually $\bbZ^n$ for some fixed $n$, or all
subgroups which are virtually polycyclic   of Hirsch length exactly
$n$.

 In \cite{GL4} we have      associated a tree of cylinders $T_c$ to any
$\cale$-tree $T$, as follows.  
Two  (non-oriented)  %closed 
edges of $T$ are equivalent if their stabilizers are commensurable. 
 A \emph{cylinder} of $T$ is an equivalence class $Y$. We identify $Y$ with the union of its edges, 
which is a  subtree of
$T$.

Two distinct cylinders meet in at most one point.
One   can then define the tree of cylinders of $T$ as the tree $T_c$  dual to the covering of $T$ by its cylinders, as
in \cite[Definition 4.8]{Gui_limit}. Formally, 
 $T_c$ is the bipartite tree with vertex set $V(T_c)=V_0(T_c)\dunion V_1(T_c)$
defined as follows:
\begin{enumerate}
\item $V_0(T_c)$ is the set of vertices $x$ of $T$ belonging to (at least) two distinct cylinders;
\item $V_1(T_c)$ is the set of cylinders $Y$ of $T$;
\item there is an edge $\varepsilon =(x,Y)$ between $x\in V_0(T_c)$ and $Y\in V_1(T_c)$ if and only if $x$ (viewed as a
vertex of $T$) belongs to $Y$ (viewed as a subtree of $T$).
\end{enumerate}
Alternatively, one can define the  \emph{boundary} $\partial Y$ of a cylinder $Y$ as the set of vertices of $Y$ 
belonging to another cylinder, and obtain
$T_c$   from $T$ by replacing each cylinder by the cone on its
boundary.

All edges of a cylinder $Y$ have commensurable stabilizers, and we denote by $\calc\subset\cale$ the corresponding commensurability class.
We sometimes view $V_1(T_c)$ as a set of commensurability classes.

\subsection{Almost invariant subsets}\label{sec_SS}

Given a subgroup $H\subset G$, consider the action of $H$ on $G$  by left multiplication. A subset $X\subset G$ is
\emph{$H$-finite}  if it is contained in the union of finitely many 
$H$-orbits. 
Two subsets $X,Y$ are \emph{equivalent}  if their  symmetric difference $X+Y$ is $H$-finite. We   denote by $[X]$ the 
equivalence class   of $X$, and by $X^*$ the complement of $X$.

An
\emph{$H$-almost invariant subset} (or an almost invariant subset over $H$)  is a subset $X\subset G$ which  is
invariant under the  (left) action of $H$, and such that, for all
$s\in G$, the  %symmetric difference $X+Xs$ of $X$ and a  right-translate $ Xs$ is $H$-finite. 
 right-translate $ Xs$ is equivalent to $X$.
An $H$-almost invariant subset $X$ is \emph{non-trivial} if neither $X$ nor its complement $X^*$ is $H$-finite.
 Given $H<G$, the set of equivalence classes of
$H$-almost invariant subsets is a \emph{Boolean algebra $\calb_H$}  for the usual operations.

If $H$ contains $H'$ with finite index, then any $H$-almost invariant subset $X$ is also $H'$-almost invariant.
Furthermore, two sets  $X,Y$ are equivalent over $H'$ if and only if they are  equivalent over $H$. 
It follows that, given a 
commensurability class
$\calc$ of subgroups of
$G$, the set of equivalence classes of almost invariant subsets over   subgroups in $\calc$ is a \emph{Boolean algebra
$\calb_\calc$.}

Two almost invariant subsets $X$ over $H$, and   $Y$ over $K$, are \emph{equivalent} if their symmetric difference
$X+Y$ is
$H$-finite. By   \cite[Remark 2.9]{ScSw_regular}, this is a symmetric relation: $X+Y$ is $H$-finite if and
only if it is
$K$-finite. If $X$ and $Y$ are non-trivial, equivalence implies that  $H$ and $K$ are commensurable. 

The   algebras 
$\calb_\calc$ are thus disjoint, except for the  (trivial) equivalence classes of $\emptyset$ and $G$ which belong to every
$\calb_\calc$. We denote by $\calb$ the union of the algebras $\calb_\calc$. It is the set of equivalence classes of
all almost invariant sets, but it  is not a Boolean algebra  in general.  There is a natural action of $G$ on $\calb$ induced by left translation  (or conjugation).

\subsection{Cross-connected components and regular neighbourhoods  \cite{ScSw_regular}}
\label{sec_ccc}

Let $X$ be an $H$-almost invariant subset, and $Y$ a $K$-almost invariant subset. 
 One says that 
 $X$ \emph{crosses} $Y$, or the pair $\{X,X^*\}$ crosses $\{Y,Y^*\}$, if none of the four sets $X^{(*)}\cap Y^{(*)}$ is
$H$-finite 
(the notation  $X^{(*)}\cap Y^{(*)}$ is a shortcut to denote the four possible intersections $X\cap Y$, $X^*\cap Y$, $X\cap Y^*$, and $X^*\cap Y^*$). 
By \cite{Sco_symmetry},  this is a symmetric relation. 
Note that $X $ and $Y$ do not cross if they are equivalent, and that crossing depends only on the equivalence classes of $X$ and $Y$.
Following \cite{ScSw_regular}, we will say that $X^{(*)}\cap Y^{(*)}$ is \emph{small} if it is $H$-finite (or
equivalently $K$-finite). 

Now
let $\calx$ be a subset of  $\calb$.  %equivalence classes of   almost invariant subsets.   
Let
$\ol\calx$ be the set of non-trivial unordered pairs $\{[X],[X^*]\}$ for $[X]\in\calx$. 
A \emph{cross-connected component (CCC) of $\calx$}  is an
equivalence class $C$ for the equivalence relation generated  on $\ol\calx$  %(weak or strong) 
 by crossing. We often say that $X$, rather than $\{[X],[X^*]\}$, belongs to $C$, or represents $C$.
We denote by $\calh$   the set of cross-connected components of $\calx$.

Given three distinct cross-connected components $C_1,C_2,C_3$,
say that $C_2$ is \emph{between} $C_1$ and $C_3$ if there 
are representatives  $X_i$ of $ C_i$  satisfying 
$X_1\subset X_2\subset X_3$.

A \emph{star}   is a   subset   $\Sigma \subset\calh$ containing   at least two elements, and maximal for the
following property:   given $C,C'\in \Sigma $, no $C''\in\calh$ is between $C$ and $C'$. We denote by $\cals$ the set of stars.

\begin{dfn}\label{dfn_RN}  Let $\calx\inc \calb$ be a collection of almost invariant sets. Its
  \emph {regular neighbourhood} $RN(\calx)$   is the bipartite  graph
whose vertex set is  $\calh\sqcup\cals$ (a vertex is either a
   cross-connected component or a   star),
and whose edges are pairs  $(C,\Sigma )\in\calh\times\cals $ with $C\in \Sigma $.  If $\calx$ is $G$-invariant, then $G$ acts on 
$RN(\calx)$ .
\end{dfn}

This definition is motivated by the following remark,  whose proof we leave to the reader.

\begin{rem} \label{rem_eto}

Let $T$ be any simplicial tree. Suppose that $\calh\subset T$ meets any closed edge in  a nonempty
finite set.  Define betweenness in $\calh$   by $C_2\in [C_1,C_3]\subset T$.
Then the bipartite graph defined as above is isomorphic to a subdivision of $T$.
 \end{rem}

The fact that, in the situation of Scott-Swarup, $RN(\calx)$  is   a tree is one of the main results of \cite{ScSw_regular}.
We will reprove this fact by identifying  $RN(\calx)$ with a subdivision of the tree of cylinders.

\section{{Regular neighbourhoods as trees of cylinders } }
\label{sec_reg}

In this section we fix a family $\cale$ as in Definition \ref{dfn_ce}: it is stable under commensurability, and a group of $\cale$
cannot contain another with infinite index.  Let $T$ be an $\cale$-tree.

In the first subsection we define  the set $\calb(T)$ of almost invariant sets based on $T$,
and we state the main result (Theorem \ref{thm_RN}): up to subdivision,
the regular neighbourhood $RN(\calb(T))$ of $\calb(T)$ is the tree of cylinders $T_c$. 
 In Subsection \ref{sec_for}, we represent elements
of $\calb(T)$ by special subforests of $T$. We then study the cross-connected components of  $\calb(T)$,  
and we prove Theorem \ref{thm_RN} in Subsection \ref{sec_pf} by constructing a map $\Phi$ from the set of cross-connected components to $T_c$. In Subsection
\ref{sec_QH} we generalize Theorem \ref{thm_RN} by including almost invariant sets enclosed by quadratically hanging vertices of
$T$ (see Theorem \ref{thm_RN2}).

\subsection{Almost invariant sets based  on  a tree}
\label{sec_bas}

We fix a  basepoint $v_0\in V(T)$.
If $e$ is an edge of $T$, we denote by $\rond e$ the open edge.  
Let  $T_e,T_e^*$  be the connected components of  
$T\setminus  \rond  e $.
The set of $g\in G$ such that
$gv_0\in T_e $ (resp.\ $gv_0\in T_e^*$) is an almost invariant set $Z_e$ (resp. $Z_e^*$) over $G_e$. Up to equivalence, it is
independent of $v_0$. When we need to distinguish between $Z_e$ and $Z_e^*$, we orient $e$ and we declare that the terminal vertex of $e$ belongs to $T_e$.

Now consider a cylinder $Y\subset T$ and  the corresponding commensurability class $\calc$.
Any Boolean combination of $Z_e$'s for $e\in E(Y)$ is an almost invariant set over some subgroup $H\in \calc$.

\begin{dfn}\label{dfn_bool}
  Given a cylinder $Y$, associated to a commensurability class $\calc$, the \emph{Boolean algebra of almost invariant subsets based on $Y$} 
is the subalgebra   $\calb_\calc(T)$ of $\calb_\calc$ generated by the classes $[Z_e]$, for $e\in E(Y)$. 

The set of \emph{almost invariant subsets based on $T$} is the union
 $\calb(T) =\bigcup_{\calc} \calb_\calc(T)$, a subset of 
$\calb =\bigcup_\calc \calb_\calc $ (just like $\calb$, it is a union of Boolean algebras  but   not itself a Boolean algebra).
\end{dfn}

  \begin{prop} \label{prop_bool} Let $T$ and $T'$ be  minimal $\cale$-trees. Then $\calb(T)=\calb(T')$
    if and only if $T$ and $T'$ belong to the same deformation space.

More precisely,
 $T$ dominates $T'$ if and only if $\calb(T')\subset \calb(T)$.   
   \end{prop}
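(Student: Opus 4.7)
\emph{Plan.} The first equivalence reduces to the ``more precisely'' statement by symmetry, since $T$ and $T'$ lie in the same deformation space exactly when each dominates the other. I therefore focus on showing that $T$ dominates $T'$ iff $\calb(T') \subset \calb(T)$.

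\emph{Forward direction.} Suppose $T$ dominates $T'$, and fix a $G$-equivariant simplicial map $f : T \to T'$ (subdividing $T$ if needed). For each edge $e' \in E(T')$, let $E_{e'} := \{e \in E(T) \mid f(e) = e'\}$. Equivariance forces $G_e \leq G_{e'}$ for every $e \in E_{e'}$; as both groups lie in $\cale$, the hypothesis on $\cale$ yields $[G_{e'} : G_e] < \infty$. Hence $G_e$ is commensurable with $G_{e'}$, so $e$ lies in the cylinder $Y$ of $T$ of class $\calc := [G_{e'}]$, and each $G_{e'}$-orbit in $E_{e'}$ is finite of cardinality $[G_{e'} : G_e]$. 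Since two elements of $E_{e'}$ in the same $G$-orbit must differ by an element of $G_{e'}$, the quotient $E_{e'}/G_{e'}$ injects into the finite set $E(T)/G$; thus $E_{e'} = \{e_1, \ldots, e_k\}$ is finite. Choosing orientations so that $v_0$ lies on the initial side of each $e_i$ and $f(v_0)$ on the initial side of $e'$, a parity-of-crossings argument gives $Z_{e'} = Z_{e_1} + \cdots + Z_{e_k}$: the $f$-image of the geodesic $[v_0, gv_0]_T$ has the same endpoints as the geodesic $[f(v_0), gf(v_0)]_{T'}$, hence the same parity of $e'$-crossings, and each $e'$-crossing of the $f$-image comes from a unique $E_{e'}$-crossing in $T$. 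Thus $[Z_{e'}] \in \calb_\calc(T)$, and since such classes generate each Boolean subalgebra $\calb_{\calc'}(T')$, we conclude $\calb(T') \subset \calb(T)$.

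\emph{Backward direction.} Assume $\calb(T') \subset \calb(T)$; I construct a $G$-equivariant map $\phi : V(T) \to V(T')$, which will prove that $T$ dominates $T'$. For each $e' \in E(T')$, the inclusion provides a finite cut $C_{e'} \subset E(T)$ lying in a single cylinder of $T$ (of class $[G_{e'}]$) such that $Z_{e'} = \sum_{e \in C_{e'}} Z_e$ modulo a $G_{e'}$-finite set. For each $v \in V(T)$, the parity of $C_{e'}$-edges crossed along the segment $[v_0, v]_T$ designates a side of $e'$ in $T'$; I then define $\phi(v)$ to be the vertex of $T'$ lying on every designated side. Equivariance of $\phi$ is inherited from the equivariance of the family $\{C_{e'}\}_{e' \in E(T')}$ (which can be arranged by a canonical choice of cut).

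\emph{Main obstacle.} The delicate step is establishing that these side-choices are jointly \emph{consistent}: for each $v$, there actually exists a vertex of $T'$ lying on all the designated sides. Equivalently, the relative arrangement of the cuts $\{C_{e'}\}$ inside $T$ must mirror the tree structure of $T'$---when $e'$ separates $e''$ from a reference vertex in $T'$, the cut $C_{e'}$ should separate $C_{e''}$ from $v_0$ in $T$, up to finite error. I expect to prove this by contradiction: an inconsistent configuration would allow one to exhibit a Boolean combination of the $[Z_{e'}]$'s lying in some $\calb_{\calc'}(T')$ but in no $\calb_{\calc''}(T)$, contradicting the hypothesis.
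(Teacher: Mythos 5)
Your forward direction is essentially the paper's argument: subdivide so there is an equivariant simplicial map $f:T\to T'$, observe that the hypotheses on $\cale$ force each fiber $f^{-1}(e')$ to be finite (any two edges of $T$ over $e'$ in the same $G$-orbit differ by an element of $G_{e'}$, and the index $[G_{e'}:G_e]$ is finite), and conclude that $[Z_{e'}]$ is a Boolean combination (a symmetric difference with your orientation convention) of the $[Z_{e_i}]$. That part is fine.

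The backward direction is where the proposal has a genuine gap, and you acknowledge it yourself. You attempt to build an equivariant map $\phi:V(T)\to V(T')$ by a parity rule from cuts $C_{e'}\subset E(T)$, and you flag as a ``main obstacle'' the consistency of the resulting side-choices --- i.e. the claim that for each $v\in V(T)$ the chosen half-trees of $T'$ have a common vertex. You only sketch ``I expect to prove this by contradiction''; that is not a proof, and the step is not routine. (There are additional unaddressed issues in this approach: a general element of $\calb_\calc(T)$ corresponds to an arbitrary union of components of $T\setminus\{e_1,\dots,e_n\}$ and need not satisfy a parity rule; and a coherent edge-orientation of $T'$ can terminate at an end rather than a vertex.) The paper sidesteps map-construction entirely by using the characterization of domination via ellipticity: it suffices to show every subgroup $K$ elliptic in $T$ is elliptic in $T'$. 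If not, one finds an edge $e'\subset T'$ and sequences $g_n\in G$, $k_n\in K$ with $g_nv'$ and $g_nk_nv'$ unbounded and $e'\subset[g_nv',g_nk_nv']$; on the other hand, choosing a $K$-fixed vertex of $T$ as basepoint shows every class in $\calb(T)$, hence $[Z_{e'}]$, is represented by a set invariant under right multiplication by $K$, modulo $G_{e'}$-finite error. These two facts contradict each other, since $G_{e'}$ is elliptic. You should either carry out your consistency argument in full or, more simply, switch to the ellipticity criterion.
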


\begin{proof}
Assume that $T$ dominates
$T'$. After subdividing $T$ (this does not change $\calb(T)$), we may assume that there is an equivariant map
  $f:T\ra T'$ sending every edge to a vertex or an edge. We claim that, given $e'\in E(T')$, there are only finitely many 
edges $e_i\in E(T)$ such that
$f(e_i)=e'$.  To see this, we may restrict to a $G$-orbit of edges of $T$, since there are finitely many such
orbits. If $e$ and $ge$ both map onto $e'$, then $g\in G_{e'}$.
Because of the hypotheses on $\cale$, the stabilizer $G_e$ is contained in $G_{e'}$ \emph{with finite index}. The claim follows.

Choose basepoints $v\in T$ and $v'=f(v)\in T'$. Then $Z_{e'}$ (defined using $v'$) is a Boolean combination of the sets
$Z_{e_i}$ (defined using $v$), so $\calb(T')\subset \calb(T)$.

Conversely,  assume $\calb(T')\subset\calb(T)$. 
Let $K\subset G$ be a subgroup elliptic in $T$. We show that it is also elliptic in $T'$. 

If not, we can find 
an edge $e'=[v',w']\subset T'$, and sequences $g_n\in G$ and $k_n\in K$, such that the
sequences $g_nv'$ and $g_nk_nv'$  have no  bounded subsequence, and $e'\subset [g_nv',g_nk_nv']$ for all $n$ (if $K$ contains a hyperbolic
element $k$, we choose $e'$ on its axis, and we define $g_n=k^{-n}$, $k_n=k^{2n}$; if $K $ fixes an end $\omega $, we
 want $g_n\m e'\subset [v',k_nv']$ so
we choose $e'$ and $g_n$ such that all edges $g_n^{-1}e'$ are contained on a  ray $\rho $ going out to $\omega $, and then we choose  $k_n$).
Defining $Z_{e'}$ using the vertex $v'$ and a suitable orientation of $e'$, we have $g_n\in Z_{e'}$ and $g_nk_n\notin
Z_{e'}$. 

Using a vertex of $T$ fixed by $K$ to define the almost invariant sets $Z_e$, we see that any element of $\calb(T)$ is
represented by an almost invariant set $X$ satisfying $XK=X$. In particular, since $\calb(T')\subset\calb(T)$, there
exist finite sets $F_1,F_2$ such that  $Z=(Z_{e'}\setminus G_{e'}F_1)\cup G_{e'}F_2$ is $K$-invariant on the right. For every
$n$ one has  $g_nk_n\in  G_{e'}F_2$  (if $g_n,g_nk_n\in Z$) or   $g_n\in G_{e'}F_1$ (if not), so one of the sequences    $g_nk_nv'$ or $g_nv'$  
 has a bounded subsequence  
(because $G_{e'}$ is elliptic), a contradiction.
\end{proof}

\begin{rem*}
The only fact used in the proof is    that 
no edge stabilizer of $T$ has infinite index in an edge stabilizer of $T'$.
\end{rem*}

We can now state:

\begin{SauveCompteurs}{thm_RN}
\begin{thm}\label{thm_RN}
Let $T$ be a minimal $\cale$-tree, with $\cale$ as in Definition \ref{dfn_ce}, and   $T_c$   its tree of cylinders for the commensurability relation. 
Let $\calx=\calb(T)$ be the set of almost invariant subsets based on $T$.

Then $RN(\calx)$ is  equivariantly isomorphic to a subdivision of $T_c$. \end{thm}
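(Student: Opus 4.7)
The plan is to apply Remark \ref{rem_eto}. Concretely, I would construct an equivariant embedding $\Phi\colon \calh\to T_c$ of the set of cross-connected components of $\calb(T)$ into the tree of cylinders, such that $\Phi(\calh)$ meets every closed edge of $T_c$ in a nonempty finite set and so that betweenness in $\calh$ corresponds to betweenness of the images in $T_c$.

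The first step is to build a concrete combinatorial model for $\calb(T)$. Since each class $[X]\in\calb_\calc(T)$ is a Boolean combination of the generators $[Z_e]$ for $e\in E(Y)$ (where $Y$ is the cylinder associated to $\calc$), it is encoded by a suitable collection $F$ of oriented edges inside $Y$: $X$ consists of those $g\in G$ such that $gv_0$ lies on a prescribed side of $F$. This representation turns the Boolean operations and the crossing relation into combinatorics of the subforest $F\subset Y\subset T$.

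The second step is to show that every CCC is contained in a single $\ol{\calb_\calc(T)}$. If $X\in\calb_\calc(T)$ and $X'\in\calb_{\calc'}(T)$ are based on distinct cylinders $Y\ne Y'$, then the bridge between $Y$ and $Y'$ in $T$, together with the hypotheses of Definition \ref{dfn_ce} (so that no edge stabilizer has infinite index in any commensurable edge stabilizer, and the supporting forests can be pushed to one side of the bridge), forces one of the four intersections $X^{(*)}\cap X'^{(*)}$ to be small; hence $X$ and $X'$ do not cross, and crossing cannot propagate between distinct cylinders. This lets me define $\Phi(C)$ to be the vertex of $V_1(T_c)$ corresponding to the unique cylinder supporting $C$.

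The third step is to verify the hypotheses of Remark \ref{rem_eto}: (i) every cylinder $Y$ receives at least one CCC, namely the one containing the pair $\{[Z_e],[Z_e^*]\}$ for any $e\in E(Y)$, which is nontrivial since $T$ is minimal; (ii) only finitely many CCCs map to each cylinder; and (iii) a CCC $C_2$ is between $C_1$ and $C_3$ in $\calh$ if and only if the cylinder $\Phi(C_2)$ lies on the $T_c$-geodesic from $\Phi(C_1)$ to $\Phi(C_3)$.

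The main obstacle will be (iii), translating a chain of inclusions $X_1\subset X_2\subset X_3$ of almost invariant sets supported on cylinders $Y_1,Y_2,Y_3$ into the statement that $Y_2$ separates $Y_1$ from $Y_3$ in $T_c$: this should reduce to showing that the supporting forests of the $X_i$ must be aligned along a common geodesic in $T$ that visits $Y_1$, $Y_2$, $Y_3$ in order. The same analysis identifies each star $\Sigma\in\cals$ with a vertex of $V_0(T_c)$: such a vertex lies in the boundary of several cylinders, and gives exactly a maximal between-free family of CCCs, one per incident cylinder. With these correspondences, Remark \ref{rem_eto} produces the desired equivariant isomorphism between $RN(\calb(T))$ and a subdivision of $T_c$.
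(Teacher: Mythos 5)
Your overall strategy — representing elements of $\calb(T)$ by subforests of $T$, showing that cross-connected components localize to single cylinders, and then invoking Remark~\ref{rem_eto} via a suitable map $\Phi\colon \calh\to T_c$ — is the same as the paper's. Steps 1 and 2 match the paper's Lemmas~\ref{lem_prelim_ai} and \ref{lem_prelim2_ai} closely, and there is no difficulty there.

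The gap is in Step 3, in the definition of $\Phi$. You propose to send \emph{every} CCC based on a cylinder $Y$ to the single vertex $Y_c\in V_1(T_c)$. But a cylinder can carry several CCCs at once, and indeed you acknowledge this (``only finitely many CCCs map to each cylinder''). Once $\Phi$ collapses several CCCs to a single vertex it is no longer an embedding, and Remark~\ref{rem_eto} — which takes as input a subset $\calh\subset T_c$ with its induced betweenness — simply does not apply. Concretely, take a cylinder $Y$ with $|\partial Y|=3$: then there are three distinct CCCs (all \emph{peripheral}, one per boundary vertex of $Y$), and they must be sent to three different points of $T_c$ (the midpoints of the three edges of $T_c$ incident to $Y_c$), not all to $Y_c$.

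What the paper does, and what is missing from your plan, is the analysis in Subsection~\ref{sec_per}: one singles out the \emph{peripheral} almost invariant sets $X_{v,Y}$, shows each $\{[X_{v,Y}],[X_{v,Y}^*]\}$ is a complete CCC on its own, proves the nontrivial lemma that a cylinder $Y$ supports \emph{at most one non-peripheral} CCC (and exactly one iff $|\partial Y|\ne 2,3$), and then defines $\Phi$ to send the non-peripheral CCC of $Y$ to $Y_c$ and each peripheral CCC $C_{v,Y}$ to the midpoint of the edge $(v_c,Y_c)$. This is precisely what makes $\Phi$ injective and produces a genuine subdivision rather than $T_c$ itself. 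Without this peripheral/non-peripheral dichotomy and the associated uniqueness lemma, the verification of betweenness in Step 3(iii) also cannot be carried out, since it must treat the cases $Y_1=Y_2$ or $Y_2=Y_3$ (two of the CCCs based on the same cylinder) separately, which your sketch of ``aligned along a common geodesic visiting $Y_1,Y_2,Y_3$ in order'' does not do.
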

 \end{SauveCompteurs}

Note that $RN(\calx)$ and  $T_c$ only depend on the deformation space of $T$ (Proposition \ref {prop_bool}, and \cite[Theorem 1]{GL4}).  

 To prove the version of Theorem \ref{thm_RN} stated in the introduction, one takes $\cale$ to be the family of subgroups commensurable to an edge stabilizer of $T$. 

 The theorem will be proved in the next three subsections. 
We  always fix a base vertex $v_0\in T$.

\subsection {Special forests}
\label{sec_for}
 
Let $S$, $S'$  be   subsets of $V(T)$.    
We say that $S$ and $S'$ are \emph{equivalent} if their symmetric difference is finite, that $S$   is trivial if it is
equivalent to $\emptyset$ or $V(T)$. 

The
\emph {coboundary $\cobo S$} is the set of edges having   one endpoint in $S$  and one in $S^*$ (the complement of $S$ in $V(T)$).
We shall be interested 
in sets $S$ with finite coboundary.  Since $\cobo (S \cap S')\subset \cobo S \cup \cobo S'$, they form a  Boolean algebra.

We  also  view such an  $S$ as a \emph{subforest} of $T$, by including all edges whose endpoints are both in $S$; we can then consider the  (connected) \emph{components} of $S$. The set of edges of $T$ is partitioned into edges in $S$, edges in $S^*$, and edges in $\cobo S=\cobo S^*$. 
Note that $S$ is equivalent to the set of endpoints of its edges. In particular, $S$ is finite 
(as a   set of vertices)  if and only if it contains finitely many edges.

We say that
$S$ is a \emph{special forest} based on a cylinder $Y$ if $\cobo S=\{e_1,\dots,
e_n\}$ is finite and contained in $Y$.  Note that $S$, if non-empty, contains at
least one vertex of
$Y$.
Each component of $S$ (viewed as a subforest)  is   a component of $T\setminus\{\rond e_1,\dots,
\rond e_n\}$, and   $S^*$ is the union of the other components of $T\setminus\{\rond e_1,\dots,
\rond e_n\}$.

We define $\calb_Y$ as the  Boolean algebra of equivalence classes of special forests based on $Y$.

Given a special forest $S$ based on $Y$,   we define $  X_S=\{g \mid gv_0\in S\}$. It is an
almost invariant set over $H=\cap_{e\in\cobo S}G_e$, 
a  subgroup of $G$ belonging to  the commensurability class $\calc$ associated to $Y$; we denote its equivalence
class by $  [X_S]$.  
Every  element of  $ \calb(T)$ may be represented in this form. More precisely:
\begin{lem} \label{lem_prelim_ai}
Let $Y$ be a cylinder, associated to a commensurability class $\calc$. The map $S\mapsto 
[X_S]$ induces an isomorphism of Boolean algebras between $\calb_Y$ and $\calb_\calc(T)$.
\end{lem}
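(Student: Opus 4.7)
The plan is to verify that $\Phi\colon S\mapsto X_S$ induces a well-defined Boolean algebra morphism $\calb_Y\to\calb_\calc(T)$, and is both surjective and injective.

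\textbf{Well-definedness.} For a special forest $S$ with $\cobo S=\{e_1,\dots,e_n\}$, the subgroup $H=\bigcap_i G_{e_i}$ is commensurable with each $G_{e_i}$ (by induction on $n$, since commensurability is preserved under intersection), hence $H\in\calc$. Since $H$ fixes each $e_i$ (no inversions), $H$ preserves every component of $T\setminus\rond{\cobo S}$, in particular $S$, so $X_S$ is left $H$-invariant. Almost-invariance under right translation reduces to the fact that $X_S+X_Ss$ corresponds to the vertices $gv_0$ which ``cross'' $\cobo S$ when we right-multiply by $s$; these are controlled by $\{e_1,\dots,e_n\}$ and lie in finitely many $H$-orbits. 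Passage to equivalence classes is handled by checking that $S$ finite implies $X_S$ is $H$-finite: $X_S$ is then a finite union of cosets of $G_{v_0}$, and using $H\subset G_{v_0}$ for $v_0\in S$ together with finite-index commensurability, one repartitions into finitely many right cosets of $H$.

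\textbf{Boolean homomorphism and image.} The identities $X_{S_1\cap S_2}=X_{S_1}\cap X_{S_2}$ and $X_{S^*}=X_S^*$ are immediate from the definition, and the inclusion $\cobo(S_1+S_2)\subset \cobo S_1\cup\cobo S_2$ guarantees the output remains a special forest based on $Y$. For the image, orient each $e\in E(Y)$ so that its terminal vertex lies in $T_e$; then $V(T_e)$ is a special forest with $\cobo=\{e\}$ and $X_{V(T_e)}=Z_e$ by the definition of $Z_e$. Since every special forest $S$ is a Boolean combination of the half-trees $V(T_{e_i})$ for $e_i\in\cobo S$, and since $\calb_\calc(T)$ is by definition generated by the $[Z_e]$, the map $\Phi$ surjects onto $\calb_\calc(T)$.

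\textbf{Injectivity.} This is the step I expect to be the main obstacle. By the Boolean-morphism property, it reduces to showing: if $X_S$ is $H$-finite, then $S$ is finite as a subset of $V(T)$. Suppose for contradiction that some component $C$ of $S$ is unbounded; then $C$, being a subtree of $T$ bounded by only finitely many edges of $\cobo S$, contains an infinite geodesic ray $\rho$. By minimality of $T$, each edge of $\rho$ lies on the axis of some hyperbolic element of $G$, hence $Gv_0\cap C$ is infinite. The $\cale$-hypothesis---no two members of $\cale$ are contained in one another with infinite index---prevents $H$ from fixing an entire ray of $T$ outside the cylinder $Y$, and more generally prevents $H$ from acting with finitely many orbits on the unbounded set $Gv_0\cap C$ (such a cofinite action would force a conjugate of some edge stabilizer of $T$ to strictly contain $H$ with infinite index). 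Consequently $Gv_0\cap C$ decomposes into infinitely many $H$-orbits, contradicting the $H$-finiteness of $X_S$. Combined with the preceding surjectivity, this yields the desired isomorphism of Boolean algebras.
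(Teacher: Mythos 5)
Your overall skeleton is the same as the paper's: check the Boolean-morphism property, note surjectivity because each $T_e$ is a special forest so $Z_e = X_{T_e}$, and reduce injectivity to the ``kernel'' computation: $X_S$ is $H$-finite if and only if $S$ is finite. The surjectivity argument and the morphism identities match. However, both halves of the kernel computation, which you correctly flag as the crux, have genuine gaps.

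For the direction ``$S$ finite $\implies X_S$ $H$-finite'', you write ``using $H\subset G_{v_0}$ for $v_0\in S$ together with finite-index commensurability.'' This does not close the argument. Containment $H\subset G_x$ gives nothing: you need $H\cap G_x$ to have \emph{finite index} in $G_x$ for each $x\in S$, since the number of $H$-orbits in $\{g : gv_0 = x\}$ is exactly $[G_x : H\cap G_x]$. The missing ingredient is that $S$ finite with $\cobo S$ finite forces every $x\in S$ to have finite valence in $T$, which in turn forces $G_x$ to be commensurable with the edge stabilizers at $x$ (by the orbit-stabilizer theorem applied to the finitely many incident edges), hence with $H$. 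That finite-valence step is the whole point of the paper's argument, and it is not in yours.

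For the direction ``$S$ infinite $\implies X_S$ not $H$-finite'', your invocation of the $\cale$-hypothesis is both unnecessary and unsupported. The claim that a cofinite $H$-action on $Gv_0\cap C$ ``would force a conjugate of some edge stabilizer of $T$ to strictly contain $H$ with infinite index'' is not justified and I do not see how to make it work. The correct mechanism is elementary: $H=\bigcap_{e\in\cobo S}G_e$ fixes the edge $e_1$, so $H$ is elliptic and every $H$-orbit in $T$ is bounded. Meanwhile, minimality gives a hyperbolic element $g$ with $g^nv_0\in S$ for all $n\ge 0$, and $g^nv_0\to\infty$. If $X_S$ were $H$-finite, a pigeonhole gives $n_i\to\infty$ with $g^{n_i}=h_ig^{n_0}$, so $g^{n_i}v_0$ would be trapped in the bounded set $H\cdot(g^{n_0}v_0)$, a contradiction. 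The $\cale$-hypothesis plays no role here; it is needed elsewhere in the paper (e.g.\ Proposition~\ref{prop_bool}), not in this lemma.
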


\begin{proof} It is easy to check that 
$S\mapsto [X_S]$ is a morphism of Boolean algebras. It is onto because  the set   $T_e$ used to define the
almost invariant set 
$Z_e$ is a  special forest (based on the cylinder containing $e$). There remains to determine the ``kernel'',
namely to show that
$X_S$ is $H$-finite if and only if $S$  is finite  (where $H$ denotes any group in $\calc$).

First suppose that $S$ is finite.  
Then $S$ is contained in $Y$ since it contains any connected component of $T\setminus Y$ which
it intersects.
Since $\cobo S$ is finite, no vertex $x$ of $S$ has infinite valence in $T$.
In particular, for each vertex $x\in S$, the group $G_x$ is commensurable with $H$.
It follows that   $\{g\in G\mid g.v_0=x\}$ is $H$-finite, and $X_S$ is $H$-finite.

If $S$ is infinite,  one of its components   is infinite, and by minimality of $T$
 there exists a hyperbolic element $g\in G$  such that  $g^nv_0\in S$ for all $n\ge 0$. Thus $g^n\in X_S$ for $n\ge0$.
 If $X_S $ is $H$-finite,   one can find   a sequence $n_i$ going to infinity,
and $h_i\in H$, such that $g^{n_i} =h_ig^{n_0} $. Since $H$ is elliptic in $T$,
the sequence  $h_ig^{n_0}v_0$ is bounded, a contradiction.
\end{proof}

\begin{lem} \label{lem_prelim2_ai} 
Let
   $S,S'$ be special forests. %, the following statements hold. 
\begin{enumerate}
\item If $S,S'$ are infinite and based on distinct cylinders,  and if $S\cap S'$ is finite,
then $S\cap S'=\es$.
\item If $X_S$ crosses $X_{S'}$, then $S$ and $S'$ are based on the same cylinder.
\item  $X_S\cap X_{S'}$ is small if and only if $S\cap S'$ is finite.
\end{enumerate}
\end{lem}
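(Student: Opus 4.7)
I will prove (3) first, then use it together with a structural case analysis for (1), and then derive (2) by a variant of that analysis.

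\textbf{Part (3).} Observe that $X_S^{(*)}\cap X_{S'}^{(*)}=\{g:gv_0\in S^{(*)}\cap (S')^{(*)}\}$. If $S\cap S'$ is finite, its coboundary is contained in the finite set $\cobo S\cup\cobo S'\subset Y\cup Y'$, so every $x\in S\cap S'$ has finite valence in $T$. Arguing as in Lemma \ref{lem_prelim_ai}, the stabilizer $G_x$ is then commensurable with any $H$ in the commensurability class associated to $Y$, so each coset $\{g:gv_0=x\}$ is $H$-finite, and $X_S\cap X_{S'}$ is small. Conversely, if $S\cap S'$ is infinite, a counting argument (a finite subforest with nontrivial finite coboundary contributes at least one edge to that coboundary, and $\cobo S\cup\cobo S'$ has only finitely many edges) shows that some component of $S\cap S'$ is unbounded; a ray in this component produces a hyperbolic $g\in G$ with $g^nv_0\in S\cap S'$ for all $n\ge 0$, and since $H$ is elliptic (it shares a finite-index subgroup with an edge stabilizer of $T$), no finite union of $H$-cosets can absorb the sequence $\{g^n\}$.

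\textbf{Part (1).} The key tool is the equivalence $v\in S\iff pr_Y(v)\in S$, valid because the arc from $v$ to $pr_Y(v)$ meets $Y$ only at its endpoint, hence avoids $\cobo S\subset Y$. Suppose $v\in S\cap S'$ with $S,S'$ based on distinct cylinders $Y$ and $Y'$, and let $q\in Y$, $q'\in Y'$ be the endpoints of the bridge between $Y$ and $Y'$ (with $q=q'=p$ if $Y\cap Y'=\{p\}$). If $pr_Y(v)\ne q$, then the component $H_v$ of $v$ in the graph obtained from $T$ by removing the open edges of $Y$ is disjoint from $Y'$, so no edge of $H_v$ lies in $\cobo S\cup\cobo S'$; this places $H_v$ in the same component of $T\setminus\mathring{(\cobo S\cup\cobo S')}$ as $v$, whence $H_v\subset S\cap S'$. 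Minimality of $T$ makes $H_v$ unbounded, so $S\cap S'$ is infinite. The case $pr_{Y'}(v)\ne q'$ is symmetric. Finally, if $pr_Y(v)=q$ and $pr_{Y'}(v)=q'$, then $v$ lies in the ``bridge component'' $\Omega$ obtained by removing the open edges of $Y\cup Y'$ and taking the piece attached to both $Y$ (at $q$) and $Y'$ (at $q'$); again edges of $\Omega$ lie outside $\cobo S\cup\cobo S'$, so $\Omega\subset S\cap S'$, and $\Omega$ is unbounded by minimality. In every case $S\cap S'$ is infinite, so the contrapositive yields $S\cap S'=\emptyset$ whenever $S\cap S'$ is finite.

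\textbf{Part (2).} Since $\cobo S\subset Y$ is disjoint from the subtree $Y'$, no edge of $Y'$ lies in $\cobo S$ and $Y'$ is contained entirely in $S$ or in $S^*$; similarly $Y\subset S'$ or $Y\subset (S')^*$, giving four configurations. In each, the bridge argument used in (1) (the bridge between $Y$ and $Y'$ meets neither $\cobo S$ nor $\cobo S'$) places $q,q'$ on definite sides of $S$ and $S'$; the region analysis of part (1) then shows that exactly one of the four quadrants $S^{(*)}\cap (S')^{(*)}$ is empty. For instance, if $Y'\subset S$ and $Y\subset S'$ one has $q,q'\in S\cap S'$ and the empty quadrant is $S^*\cap(S')^*$. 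The corresponding $X_S^{(*)}\cap X_{S'}^{(*)}$ is small (indeed empty) by part (3), so $X_S$ does not cross $X_{S'}$; contrapositively, $X_S$ crossing $X_{S'}$ forces $S,S'$ to be based on the same cylinder.

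The principal obstacle is the case analysis underlying (1): correctly identifying the bridge component $\Omega$ in both the disjoint-cylinder and single-point-intersection regimes, and confirming its unboundedness from minimality of $T$. Once this structural picture is in place, (2) reduces to a mechanical enumeration of the four configurations for $(Y,Y')$ relative to $(S,S')$.
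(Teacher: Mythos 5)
Your overall strategy is the same as the paper's (the bridge between the two cylinders controls everything), but the execution has two genuine gaps, and the decision to prove (3) before (1) is precisely what creates the first one.

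\textbf{Gap in part (3).} You write that since each $x\in S\cap S'$ has finite valence, ``arguing as in Lemma \ref{lem_prelim_ai}, the stabilizer $G_x$ is commensurable with any $H$ in the commensurability class associated to $Y$.'' That step in Lemma \ref{lem_prelim_ai} is justified because a \emph{finite special forest based on $Y$} is actually contained in $Y$, so $x$ lies in $Y$ and $G_x$ contains an edge stabilizer of $Y$ with finite index. But when $S$ and $S'$ are based on \emph{distinct} cylinders $Y\ne Y'$, the set $S\cap S'$ is not a special forest based on $Y$: its coboundary lies in $Y\cup Y'$, not in $Y$. A vertex $x\in S\cap S'$ of finite valence could have all its incident edges in $Y'$, in which case $G_x$ is commensurable to the class of $Y'$, not of $Y$, and (using the hypothesis of Definition \ref{dfn_ce}) the coset $\{g:gv_0=x\}$ is \emph{not} $H$-finite. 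The paper avoids this by proving (1) first: in the distinct-cylinder case (1) yields $S\cap S'=\emptyset$ outright, and in the same-cylinder case $S\cap S'$ really is a finite special forest based on $Y$, hence contained in $Y$, and the $G_x\sim H$ argument goes through. As written, your (3) is circular in exactly the case that (1) is meant to resolve.

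\textbf{Gap in part (1).} The step ``Minimality of $T$ makes $H_v$ unbounded'' (and likewise for $\Omega$) is false in general. If $v\in Y\setminus\{q\}$ has \emph{all} its incident edges in $Y$, then removing the open edges of $Y$ leaves $H_v=\{v\}$, a single point; similarly $\Omega$ can reduce to $\{q,q'\}$ when the bridge is a single edge and $q$, $q'$ carry no edges outside $Y\cup Y'\cup\{\text{bridge}\}$. Minimality gives unboundedness of components of $T\setminus Y$ (with $Y$ regarded as a subtree), not of components of $T$ minus only the \emph{open} edges of $Y$. The paper's proof sidesteps this: it uses that the bridge endpoints $u,u'$ lie in two cylinders and therefore have \emph{infinite valence} in $T$ (a vertex of finite valence has all incident edge stabilizers of finite index in $G_v$, hence commensurable, hence in a single cylinder), and then looks at the infinitely many components of $T\setminus\{u\}$, all but finitely many of which lie in $S\cap S'$. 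Your case analysis by $pr_Y(v)$ versus $q$ is in the right spirit, but the unboundedness claim you lean on needs to be replaced by the infinite-valence argument at $u$ and $u'$.

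Part (2) as you outline it (the bridge is ``monochromatic'' for both $S$ and $S'$, and a projection argument kills one of the four quadrants) is sound and close to the paper's, once (1) is repaired.
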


\begin{proof}
Assume that $S,S'$ are infinite and based on $Y\neq Y'$, and that $S\cap S'$ is finite.
Let $[u,u']$ be the bridge between $Y$ and $Y'$ (with $u=u'$ if $Y$ and $Y'$ intersect in a point).
Since $u$ and $u'$ lie in more than one cylinder, they have infinite valence in $T$.

Assume first that $u\in S$. Then $S$ contains all   components of $T\setminus\{u\}$, except finitely many of them
(which  intersect $Y$).
In particular, $S$ contains $Y'$. If $S'$ contains $u'$, it contains $u$ by the same argument,
and $S\cap S'$ contains infinitely many edges incident on $u$, a contradiction.
If  $S'$ does not contain  $u'$, it is   contained in $S$,
also a contradiction.

We may therefore assume $u\notin S$  and   $u'\notin S'$.
It follows that $S$ (resp.\ $S'$) is contained in the union of the components of $T\setminus\{u\}$
(resp.\ $T\setminus\{u'\}$) which intersect  $Y$ (resp.\ $Y'$), so $S$ and $S'$ are disjoint. This proves Assertion 1.

Assertion 2 may be viewed as a consequence of \cite[Prop. 13.5]{ScSw_regular}. Here is a direct argument.
Assume that $S$ and $S'$ are based on $Y\neq Y'$, and let $[u,u']$ be as above.
Up to replacing $S$  and $S'$  by their complement, we have  $u\notin S$ and $u'\notin S'$. 
The argument above shows that $S\cap S'=\es$, so $X_S$ does not cross $X_{S'}$.

For Assertion 3, first suppose that $S\cap S'$ is finite.
If, say, $S$   is finite, then $X_S$ is $H$-finite by Lemma \ref{lem_prelim_ai}, so $X_S\cap X_{S'}$ is small.  
Assume therefore that $S$ and $S'$ are infinite.
If  they are based on distinct cylinders, 
then  $X_S\cap X_{S'}=\es$ by Assertion 1.
If they are based on the same cylinder, then $S\cap S'$ is itself a finite special forest,
so $X_S\cap X_{S'}=X_{S\cap S'}$ is small by Lemma \ref{lem_prelim_ai}.
Conversely, if $S\cap S'$ is infinite, one shows that $X_S\cap X_{S'}$ is not $H$-finite as in the proof of Lemma
\ref{lem_prelim_ai}, using $g$ such that $g^nv\in S\cap S'$ for all $n\ge 0$.
\end{proof}

\begin{rem} \label{rem_disj}  If $S,S'$ are infinite and $X_S\cap X_{S'}$ is small, then $S$ and $S'$ are equivalent to
disjoint special forests. 
 This follows from the lemma if they are based on distinct cylinders. If not, one replaces $S'$ by $S'\cap S^*$. 
\end{rem}

\subsection {Peripheral cross-connected components}
\label{sec_per}

Theorem \ref{thm_RN} is trivial if $T$ is a line, so we can assume that each vertex of $T$ has valence at least 3
(we now allow $G$ to act with  inversions). We need to understand cross-connected components. By Assertion 2 of
Lemma \ref{lem_prelim2_ai}, every cross-connected component is based on a cylinder, so we focus on a given $Y$.
We first define
\emph{peripheral} special forests and almost invariant sets.

Recall that $\partial Y$ is the set of vertices of $Y$ which belong to another cylinder.
Suppose $v\in \partial Y$ is a vertex whose valence in $Y$ is finite.
Let $e_1,\dots,e_n$ be the    edges of $Y$ which contain $v$, oriented towards  $v$.
Let    $S_{v,Y}=T_{   e_1}\cap \dots \cap T_{  e_n}$ (recall that $T_e$ denotes the component of $T\setminus  \rond  e $ which contains the terminal point of $e$). It is a subtree satisfying
$S_{v,Y}\cap Y=\{v\}$,  
with coboundary $\cobo S_{v,Y}=\{e_1,\dots,e_n\}$. We say that $S_{v,Y}$, and any special forest equivalent to it, is \emph{peripheral} 
(but  $S_{v,Y}^*$ is not peripheral in general).

We denote by $X_{v,Y}$ the almost invariant set corresponding to $S_{v,Y}$, and we say that
$X$ is \emph{peripheral} if it is equivalent to some $X_{v,Y}$.
Both $S_{v,Y}$ and $S_{v,Y}^*$ are infinite, so  $X_{v,Y}$ is non-trivial by Lemma \ref{lem_prelim_ai}.

We claim that $C_{v,Y}=\{\{[X_{v,Y}],[X_{v,Y}^*]\}\}$ is a complete cross-connected component  of $\calb(T)$, 
called a \emph{peripheral} CCC. Indeed, assume that $X_{v,Y}$ crosses some $X_S$.
Then $S$ is based on $Y$ by Lemma \ref{lem_prelim2_ai}, but since $S_{v,Y}$ contains no edge of $Y$ it  is contained in
$S_X$ or $S_X^*$, which  prevents crossing.

Note that if $C_{v,Y}=C_{v',Y'}$ then $Y=Y'$ (because an $H$-almost invariant subset determines the commensurability class of $H$),
and $v=v'$ except when   $Y$ is a single  edge $vv'$ in which case  $X_{v,Y}=X_{v',Y}^*$.

\begin{lem}%\label{lem_prelim_ai}  
Let $Y $ be  a cylinder.
 There is at most one non-peripheral cross-connected component  $C_Y$ based on $Y$. There is
one if and only if $|\partial Y|\ne2,3$.
\end{lem}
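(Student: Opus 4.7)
The plan is to work inside the Boolean algebra of special forests based on $Y$, via the isomorphism of Lemma~\ref{lem_prelim_ai}. For a special forest $S$ based on $Y$, I will use its \emph{trace} $A_S := S \cap V(Y)$. Because $\cobo S \subset E(Y)$, every component of $T \setminus Y$ will lie entirely in $S$ or entirely in $S^*$ according to whether its attachment vertex in $\partial Y$ belongs to $A_S$, and every such component is infinite by minimality of $T$. This will translate the basic notions as follows: (i) $S \Delta S'$ is finite iff $A_S \Delta A_{S'}$ is finite and disjoint from $\partial Y$; (ii) $X_S \cap X_{S'}$ is small iff $A_S \cap A_{S'}$ is finite and disjoint from $\partial Y$ (by Lemma~\ref{lem_prelim2_ai}(3)); (iii) the class $[X_S]$ is peripheral iff $A_S$ or $A_S^*$ equals $\{v\} \cup F$ for some $v \in \partial Y$ of finite $Y$-valence and some finite $F \subset V(Y) \setminus \partial Y$ (since the trace of $S_{v,Y}$ is $\{v\}$).

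For the case $|\partial Y| \in \{2,3\}$, the finite generation of $G$ together with the cocompact structure of $T/G$ forces the cylinder $Y$ to be essentially finite, so that up to the equivalence in (i) every $A$ is represented by its trace on $\partial Y$. A direct enumeration then finishes the case: a proper non-empty subset of a set of size $2$ or $3$ is either a singleton $\{v\}$ or its complement $\partial Y \setminus \{v\}$, the former representing the peripheral class $[X_{v,Y}]$ and the latter its complement $[X_{v,Y}^*]$. Thus every non-trivial class will sit in a peripheral CCC, and no non-peripheral CCC exists.

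For the case $|\partial Y| \notin \{2,3\}$ (mainly $|\partial Y| \ge 4$; the degenerate cases $|\partial Y|\in\{0,1\}$ are easier as there are fewer or no peripheral classes to begin with), I will first produce a non-peripheral class by choosing four vertices of $\partial Y$ and picking an edge $e$ in their convex hull which splits them $2$-$2$: the half-trace $A_e = V(T_e) \cap V(Y)$ then satisfies $|A_e \cap \partial Y| \ge 2$ and $|A_e^* \cap \partial Y| \ge 2$, making $[X_{A_e}]$ non-peripheral by criterion~(iii). For uniqueness, I will call an edge $e \in E(Y)$ \emph{essential} if $Y \setminus \{e\}$ has $\ge 2$ boundary vertices on each side, and carry out two steps. (a) Two essential edges $e,e'$ such that the four components of $Y \setminus \{e,e'\}$ each contain a boundary vertex automatically cross, since each intersection $A_e^{(*)} \cap A_{e'}^{(*)}$ meets $\partial Y$; a walk along the convex hull of $\partial Y$ in $Y$ will chain essential edges pairwise this way, placing every $[X_{A_e}]$ for essential $e$ in one CCC $C_Y$. (b) Given an arbitrary non-peripheral $X_S$ with cut $\cobo S = \{e_1,\dots,e_n\}$, the non-periphery of $S$ will force a component of $Y \setminus \cobo S$ inside $A_S$ to carry at least two boundary vertices and a component inside $A_S^*$ to carry at least two boundary vertices; the edge $e_j \in \cobo S$ separating those two components is essential and makes $X_S$ cross $X_{A_{e_j}}$, placing $[X_S]$ in $C_Y$. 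Combining (a) and (b) will give uniqueness. The subtlest step will be (b): producing the balanced edge $e_j$ requires a combinatorial use of the non-periphery of $S$, and is the place where the hypothesis $|\partial Y| \ge 4$ really enters.
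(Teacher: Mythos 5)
Your step (a) contains a fatal error. Two distinct edges $e,e'$ of a tree can never give crossing almost invariant sets: removing the two open edges $\rond e,\rond e'$ from $Y$ (or from $T$) produces \emph{three} connected components, not four, so one of the four intersections $A_e^{(*)}\cap A_{e'}^{(*)}$ is always \emph{empty}. Equivalently, for any two edges of a tree one of the inclusions $T_e\subset T_{e'}$, $T_e\subset T_{e'}^*$, $T_e^*\subset T_{e'}$, $T_e^*\subset T_{e'}^*$ holds, so $Z_e$ and $Z_{e'}$ are nested, hence non-crossing. Thus no almost invariant set of the form $X_{A_e}$ (a single half-tree) ever crosses another $X_{A_{e'}}$, the hypothesis ``each of the four intersections meets $\partial Y$'' is vacuously unsatisfiable for $e\ne e'$, and the chaining argument collapses. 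This also undermines step (b): even if you find a balanced edge $e_j\in\cobo S$, you have no CCC $C_Y$ containing all the $[X_{A_e}]$'s to land in, since those classes do not cross each other at all.

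The fix requires almost invariant sets that are not single half-trees. The paper's argument exploits exactly this: from a non-peripheral $S$ it extracts two disjoint infinite special forests $S_1,S_2\subset S$, similarly $S'_1,S'_2\subset S'$ with $S\cap S'=\es$, and then observes that the \emph{union} $X_{S_1\cup S'_1}$ crosses both $X_S$ and $X_{S'}$ (each of the four intersections with $X_S$ contains one of $S_1$, $S'_1$, $S_2$, $S'_2$, hence is infinite). Crossing only becomes possible once you allow disconnected special forests with pieces on both sides of a cut. A secondary remark: your claim that ``finite generation plus cocompactness forces $Y$ to be essentially finite'' when $|\partial Y|\in\{2,3\}$ is not the right mechanism (cylinders can be infinite in general); what is actually used is that minimality makes $Y$ the convex hull of $\partial Y$, and then the standing assumption that every vertex of $T$ has valence $\ge 3$ rules out interior vertices of the segment/tripod, forcing $Y$ to have $1$ or $3$ edges.
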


\begin{proof}  We divide the proof into three steps.

$\bullet$
We first claim that, given any infinite connected non-peripheral special forest $S$ based on $Y$, there is an edge
$e\subset S\cap Y$ such that both connected components of $S\setminus\{\rond e\}$ are infinite. 

Assume there is no such $e$.
Then $S\cap Y$ is  
locally finite: given $v\in S$, all but finitely many components of
$S\setminus\{v\}$ are infinite, so infinitely many edges incident on $v$ satisfy the claim  if $v$ has infinite valence in
$S\cap Y$.

 Since $S$ is infinite  and non-peripheral,   $S\cap Y$   is not reduced to a single point. We orient every edge $e$ of $S\cap Y$ so that  $S\cap T_e $ is infinite and $S\cap T_e^* $ is finite.    
If a vertex $v$ of $S\cap Y$
is terminal in every edge of
$S\cap Y$ that contains it, $S$ is peripheral. We may therefore find   an infinite ray   $\rho \subset S\cap Y$ consisting of
positively oriented edges. Since every vertex of $T$ has valence $\ge3$, every vertex of $\rho $ is the projection onto
$\rho $ of an edge of $\cobo S$, contradicting the finiteness of
$\cobo S$. This proves the claim.

$\bullet$
To show that there is at most one non-peripheral cross-connected component, we fix 
  two non-trivial   forests $S,S'$ based on $Y$ and we show that
$X_S$ and $X_{S'}$ are in the same CCC, provided that they do not belong to peripheral
CCC's. We can assume that $X_S\cap X_{S'}$   is small, and by Remark \ref{rem_disj}
that  $S\cap S'$ is  empty. We may also assume that every component of $S $ and
$S'$ is infinite.

Since $S$ is not peripheral, it contains two disjoint infinite special forests $S_1,S_2$ based on $Y$: this is clear if
$S$ has several   components, and follows from the claim otherwise. Construct $S'_1,S'_2$ similarly. Then $X_{S_1}\cup
X_{S'_1}$ crosses both $X_S$ and $X_{S'}$, so $X_S$ and $X_{S'}$ are in the same cross-connected component.

$\bullet$
 Having proved the uniqueness of $C_Y$, we now discuss its existence.
If $|\bo Y|\ge4$, choose $v_1,\dots,v_4\in \partial Y$, and consider   edges $e_1,e_2,e_3$ of $Y$
such that each $v_i$ belongs to a different component  $S_i$ of $T\setminus\{\rond e_1,\rond e_2,
\rond e_3\}$. These components are infinite  because $v_i\in\bo Y$, and $X_{S_1\cup S_2} $ belongs to a non-peripheral CCC. 

If  $\bo Y=\es$, then  $Y=T$ and   existence is clear. 
If $\partial Y$ is  non-empty,   minimality of $T$ implies that $Y$ is 
the convex hull of $\partial Y$ (replacing every cylinder by the convex hull of its boundary yields an invariant subtree). From
this one deduces   that  $ | \bo Y | \ne1$, and every CCC based on $Y$ is peripheral   if
$ |\partial Y| $ equals 2 or 3. There is one  peripheral CCC if $|\partial Y|=2$ (\ie $Y$ is a single edge),  
three if
$|\partial Y|=3$. 
\end{proof}

\begin{rem}\label{rem_sep}
The proof shows that, if $\abs{\bo Y}\ge 4$,
then for all $u\neq v$ in $\partial Y$   the non-peripheral CCC   is represented by a   special forest $S $   such that $u\in S$ and $v\in S^*$.
\end{rem}

\subsection{Proof of Theorem \ref{thm_RN}}
\label{sec_pf}

 From now on  we assume that $T$ has more than one cylinder:  otherwise  there is exactly one cross-connected component, and both $RN(\calx)$ and $T_c$  are
points.

 It will be helpful to distinguish between a cylinder $Y\subset T$ or a point $\eta\in \partial Y$, and the corresponding vertex of $T_c$. We therefore
  denote by $Y_c$ or $\eta_c$ the vertex of $T_c$ corresponding to $Y$ or $\eta$.

  Recall that $\calh$ denotes the set of cross-connected components of $\calx=\calb(T)$.
Consider the map $\Phi:\calh\ra T_c$ %defined  on the set of cross-connected components  of $\calb(T)$ 
defined as follows:
\begin{itemize}
\item if  $C=C_Y$ is a non-peripheral CCC, then $\Phi(C)=Y_c\in V_1(T_c)$;
\item if  $C= C_{v,Y}$
 is peripheral, and $\#\partial Y\geq 3$,  then
$\Phi(C)$ is the midpoint of the edge $\varepsilon =(v_c,Y_c)$ of $T_c$;
\item if $\#\partial Y=2$, and $C$ is the peripheral CCC based on $Y$, then $\Phi(C)=Y_c$.
\end{itemize}
 In all cases, the distance between $\Phi(C)$ and $Y_c$   is at most $1/2$. If $C$ is peripheral, $\Phi(C)$ has valence 2 in $T_c$.

Clearly, $\Phi$ is one-to-one. By Remark \ref{rem_eto}, it now suffices to show that the image of $\Phi $ meets every
closed edge, and 
$\Phi$ preserves betweenness:
given $C_1,C_2,C_3\in\calh$, then $C_2$ is between $C_1$ and $C_3$ if and only if
  $\Phi(C_2)\in [\Phi(C_1),\Phi(C_3)]$.

The first fact is clear, because $\Phi (\calh)$ contains all vertices $Y_c \in V_1(T_c)$ with $|\bo Y|\ne3$, and the three
points at distance $1/2$ from $Y_c$ if $|\bo Y|=3$. 
To control betweenness, we need a couple of   technical lemmas.

If $S$ is a non-trivial special forest, we   denote by $[[ S]]$ the cross-connected component represented by the almost invariant set $X_S$.  

Let $Y\subset T$ be a cylinder. We denote by $pr_Y:T\ra Y$ the projection.
If $Y' $ is another cylinder, then $pr_Y(Y')$ is a single point.   This point belongs to two cylinders, 
hence defines a vertex of $V_0(T_c)$ which is at distance 1 from $Y_c$ on the segment of $T_c$ joining $Y_c$ to $Y'_c$.

 Let $Y$ be a cylinder  with $ | \bo Y | \geq 4$.
For each  non-trivial special forest $S'$ which is either based on some $Y'\neq Y$, or based on $Y$ and peripheral,
we  define a point $\eta_{Y}(S')\in Y\inc T$  as follows.
If $S'$ is   based on $Y'\ne Y$, we define $\eta_{Y}(S')$ to be $pr_Y(Y')$. 
If $S'$ is equivalent to some  $S_{v,Y}$, we define $\eta_Y(S')=v$;
note that in this case $\eta_Y(S'^*)$ is not defined.

\begin{lem}\label{lem_between1}
Let $Y$ be a cylinder with $| \bo Y |\geq 4$.
Consider two  non-trivial special forests $S, S'$  with $[[S']]\ne C_Y$
and $[[S]]=C_Y$, and assume $S'\subset S$.

Then $\eta=\eta_Y(S')\in Y$ is defined,
$\eta\in S$, and $S'$ contains an equivalent subforest $S''$ with $S''\subset pr_Y\m(\{\eta\})\inc S$.

Moreover, $\Phi([[S']])$ lies in the connected component of $T_c\setminus \{Y_c\}$ containing $ \eta_c $.
\end{lem}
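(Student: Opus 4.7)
My plan is to split according to the two possibilities in the definition of $\eta_Y(S')$: since $[[S']]\ne C_Y$, either (A) $S'$ is based on some cylinder $Y'\ne Y$ and $\eta=pr_Y(Y')$, or (B) $S'$ is based on $Y$ and peripheral, so $S'\sim S_{v,Y}$ with $\eta=v$. In both cases I first note that $\eta\in\bo Y$; for case (A) this follows from the definition of the projection, since if only one cylinder contained $\eta$, the edge at $\eta$ toward $Y'$ would lie in $Y$. Hence $\eta_c\in V_0(T_c)$ is a genuine vertex adjacent to $Y_c$ in $T_c$.

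In case (A) I analyze the components $\{K_j\}$ of $T\setminus\cobo S'$, with $K_0$ denoting the one containing $\eta$. Since $\cobo S'\subset E(Y')$ is disjoint from $E(Y)$, the component $K_0$ contains both the bridge $[u,u']$ and all of $Y$. The crucial step will be to show $K_0\not\subset S'$: otherwise $Y\subset S'\subset S$ would place every vertex of $Y$ in $S$, so no edge of $Y$ would belong to $\cobo S$, and combined with $\cobo S\subset E(Y)$ this would force $\cobo S=\emptyset$, contradicting the non-triviality of $S$. Hence $K_0\subset S'^*$, and any other component $K_j\subset S'$ lies in a branch of $T\setminus\{\eta\}$ whose edge at $\eta$ is not in $Y$ (the path from $K_j$ to $Y$ crosses $\cobo S'$ inside $Y'$, traverses the $u'$-piece of $Y'\setminus\cobo S'$, then the bridge, entering $Y$ exactly at $\eta$). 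So I take $S'':=S'$, which already satisfies $S''\subset pr_Y\m(\eta)$. For $\eta\in S$, I walk from any chosen $K_j\subset S'\subset S$ to $\eta$: cross the separating edge of $\cobo S'\subset E(Y')$, traverse $Y'$ to $u'$, follow the bridge to $u=\eta$; every edge on this walk lies outside $E(Y)\supset\cobo S$, so $\eta$ sits in the same component of $T\setminus\cobo S$ as $K_j$. The inclusion $pr_Y\m(\eta)\subset S$ then follows because each branch of $T\setminus\{\eta\}$ whose edge at $\eta$ is not in $Y$ contains no edge of $Y$ (by connectedness of $Y$ through $\eta$), so it connects to $\eta$ without crossing $\cobo S$.

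For case (B) I observe that $S_{v,Y}=pr_Y\m(\eta)$ directly from its definition, so $S'\sim pr_Y\m(\eta)$; I set $S'':=S'\cap pr_Y\m(\eta)\subset S'$, whose symmetric difference with $S'$ is finite since contained in $S'+S_{v,Y}$. The same walking argument then applies (infinitely many $x\in S'$ project to $\eta$), giving $\eta\in S$ and $pr_Y\m(\eta)\subset S$. Finally, for the ``moreover'' clause: in case (A), $\Phi([[S']])$ is within distance $1/2$ of $Y'_c$, and the $T_c$-geodesic from $Y_c$ to $Y'_c$ begins with the edge $(Y_c,\eta_c)$ because the $T$-geodesic from $Y$ to $Y'$ exits $Y$ exactly at $\eta$; in case (B), the hypothesis $|\bo Y|\ge 4$ ensures $\Phi([[S']])$ is the midpoint of $(\eta_c,Y_c)$. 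In both cases $\Phi([[S']])$ lies in the component of $T_c\setminus\{Y_c\}$ containing $\eta_c$. The main obstacle is the walking step in case (A), whose soundness depends essentially on $\cobo S\subset E(Y)$ being disjoint from both $E(Y')$ and the bridge edges, so that the path from $K_j$ to $\eta$ stays in a single component of $T\setminus\cobo S$.
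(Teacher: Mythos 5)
Your case (A) is correct and tracks the paper's argument closely; the explicit walking/component analysis is a somewhat more verbose route to the same conclusions the paper reaches with two quick contradictions (``otherwise $Y'$ would be disjoint from $S$\dots'' and ``if $\eta\in S'$, then $S'$ contains the complement of $pr_Y\m(\{\eta\})$\dots''). The ``moreover'' clause is also handled correctly in both cases.

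The gap is in the setup of case (B). You write ``since $[[S']]\ne C_Y$, either (A)\dots or (B) $S'$ is based on $Y$ and peripheral, so $S'\sim S_{v,Y}$.'' But ``$[[S']]\ne C_Y$ and $S'$ based on $Y$'' only says that the CCC $[[S']]$ is peripheral, i.e.\ equals some $C_{v,Y}=\{\{[X_{v,Y}],[X_{v,Y}^*]\}\}$; that leaves two possibilities, $S'\sim S_{v,Y}$ (what you want) or $S'^*\sim S_{v,Y}$. Since $|\partial Y|\ge 4$, $Y$ is not a single edge, so $S_{v,Y}^*$ is genuinely not peripheral, and in the second case $\eta_Y(S')$ is simply undefined (the paper's definition of $\eta_Y$ explicitly excludes this), so your proof — and indeed the conclusion of the lemma — would break down. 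You must rule this out, and this is precisely the sentence of the paper's proof that you skipped: ``If $Y'=Y$, then $S'^*$ is not peripheral, so $S'$ is peripheral.'' The reason is the containment $S'\subset S$: if $S'^*\sim S_{v,Y}$, then $S^*\subset S'^*$ is contained in $S_{v,Y}=pr_Y\m(\{v\})$ up to a finite set. Now write $S^*$ as the disjoint union of the fibres $pr_Y\m(\{y\})$ over $y\in S^*\cap Y$ (each fibre lies wholly in $S$ or in $S^*$ because $\cobo S\subset E(Y)$, and $pr_Y\m(\{y\})$ is infinite exactly when $y\in\partial Y$). For $y\ne v$ the fibre $pr_Y\m(\{y\})$ is disjoint from $S_{v,Y}$, hence finite; so $S^*$ is either finite (contradicting non-triviality) or equivalent to $pr_Y\m(\{v\})=S_{v,Y}$, giving $[[S]]=C_{v,Y}$ and contradicting $[[S]]=C_Y$. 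Once this point is supplied, the rest of your case (B) goes through.
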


\begin{proof} 
Let $Y'$ be the cylinder on which $S'$ is based.

If $Y'=Y$,  then $S'^*$ is not peripheral, so $S'$ is peripheral. 
 Thus $\eta$ is   defined, and $S'$ is equivalent to its subforest $S''=S_{Y,\eta}$. 
Then $S''=pr_Y\m(\{\eta\})\subset S$. In this case $\Phi([[S']])$ is the midpoint of the edge $(\eta_c,Y_c)$ of $T_c$. 

Assume that $Y'\neq Y$.  Then $\eta=pr_Y(Y')\in S$: 
otherwise $Y'$ would be disjoint from $S$, hence from $S'$,  a contradiction. 
It follows that  $pr_Y\m(\{\eta\})\inc S$. If $\eta\in S'$, then $S'$ contains the complement of 
 $pr_Y\m(\{\eta\})$, so  $S=T$, a contradiction. 
Thus $\eta\notin S'$ and therefore $S'\subset pr_Y\m(\{\eta\})$. 
The ``moreover''   is clear in this case since $\eta_c$ is between $Y_c$ and $Y'_c$,   and $\Phi([[S']])$ is at distance $\le1/2$ from $Y'_c$.
\end{proof}

\begin{lem}\label{lem_between2}
  Let $S=S_{Y,u}$ be peripheral, and let $S'$  be a non-trivial special forest with $[[S']]\ne  [[S]]$.
 Recall that $u_c$ is the vertex of $T_c$ associated to $u$.
\begin{enumerate}
\item If $S'\subset S$, then $\Phi([[S']])$ belongs to the component of  $T_c\setminus\{\Phi([[S]])\}$ which    
  contains  $u _c$. %\coucou{il faut utiliser $u$ et pas $Y$ car $Y$ peut \^etre un segment}
\item If $S\subset S'$, then   $\Phi([[S']])$ belongs to the   component of  $T_c\setminus\{\Phi([[S]])\}$ which     does not contain  $u _c$.  
\end{enumerate}
\end{lem}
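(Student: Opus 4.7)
The plan is to analyze the cylinder $Y'$ on which $S'$ is based and to examine the projection $\eta' = pr_Y(Y') \in Y$. Since $S = S_{Y,u}$ meets $Y$ only at $u$, the guiding geometric observation is that any cylinder $Y' \neq Y$ satisfies either $Y' \subset S$ (when $\eta' = u$) or $Y' \cap S = \emptyset$ (when $\eta' \in \partial Y \setminus \{u\}$). This dichotomy will control which component of $T_c \setminus \{\Phi([[S]])\}$ contains $\Phi([[S']])$, and the argument is parallel for the two assertions, modulo switching sides.

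For assertion (1), assume $S' \subset S$. I would first rule out $Y' = Y$: then $\cobo S' \subset Y$, but no edge of $Y$ lies entirely inside the connected subforest $S$ (since $S \cap Y = \{u\}$ and the edges of $Y$ at $u$ have their other endpoint in $S^*$), so a case analysis on whether $u \in S'$ shows that $S'$ must equal $\emptyset$ or $S$, contradicting either non-triviality or $[[S']] \neq [[S]]$. Hence $Y' \neq Y$. Since $S'$ contains some vertex of $Y'$, which must then lie in $S = S_{Y,u}$, the projection from any point of $S$ onto $Y$ yields $\eta' = u$. Consequently $Y'_c$ sits on the $u_c$-side of $Y_c$ in $T_c$, and since $\Phi([[S']])$ lies within distance $1/2$ of $Y'_c$ on an edge disjoint from $\{\Phi([[S]])\}$, it belongs to the component of $T_c \setminus \{\Phi([[S]])\}$ containing $u_c$. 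This handles both the case $|\partial Y| \geq 3$ (where $\Phi([[S]])$ is the midpoint of $(u_c, Y_c)$) and the case $|\partial Y| = 2$ (where $\Phi([[S]]) = Y_c$, and $[[S']] \neq [[S]]$ automatically forces $Y' \neq Y$ since $[[S]]$ is then the unique CCC based on $Y$).

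For assertion (2), assume $S \subset S'$. If $Y' = Y$, then $[[S']] \neq [[S]] = C_{u,Y}$ leaves only $[[S']] = C_Y$ (non-peripheral) or $[[S']] = C_{v,Y}$ for some $v \in \partial Y \setminus \{u\}$; in both subcases $\Phi([[S']])$, which is either $Y_c$ or the midpoint of an edge $(v_c, Y_c)$ with $v \neq u$, lies in the component of $T_c \setminus \{\Phi([[S]])\}$ not containing $u_c$. If instead $Y' \neq Y$, I claim $\eta' \neq u$: otherwise $Y' \subset S \subset S'$, forcing every edge of $\cobo S'$ (which lies in $Y'$) to have both endpoints in $S'$, contradicting the definition of a coboundary edge. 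Thus $\eta' \in \partial Y \setminus \{u\}$, which places $Y'_c$ in the component of $T_c \setminus \{Y_c\}$ not containing $u_c$. Moreover, any peripheral base $v' \in \partial Y'$ for $S'$ must differ from $u$ (else $u \in Y'$ would force $\eta' = u$), so the edge $(v'_c, Y'_c)$ supporting $\Phi([[S']])$ is disjoint from $(u_c, Y_c)$, and $\Phi([[S']])$ lies in the desired component.

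The main obstacle is the bookkeeping required because $\Phi([[S']])$ may sit at an edge midpoint of $T_c$ rather than at a vertex, and because the precise form of $\Phi([[S]])$ depends on whether $|\partial Y| = 2$ or $|\partial Y| \geq 3$; one must carefully rule out degenerate coincidences such as $v' = u$ or an unexpected equivalence $[[S']] = [[S]]$ arising when $Y' = Y$. The clean conceptual reduction, which I would emphasize throughout, is that the single projection $\eta' = pr_Y(Y')$ controls everything: it equals $u$ precisely when $\Phi([[S']])$ falls on the $u_c$-side of $\Phi([[S]])$, and otherwise selects a different component.
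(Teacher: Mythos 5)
Your argument is correct and follows essentially the same route as the paper: both proofs hinge on the identification $S_{Y,u} = pr_Y^{-1}(\{u\})$, and then read off the position of $\Phi([[S']])$ from whether $pr_Y(Y')$ equals $u$ (case 1) or does not (case 2), using the fact that $\Phi$ sends each CCC to a point within distance $1/2$ of the cylinder on which it is based. The paper simply asserts without proof the two small points you check in detail (that $Y' \neq Y$ in case 1, and that $pr_Y(Y') \neq u$ in case 2 follows from $S' \neq T$), so your write-up is a somewhat more explicit version of the same proof.
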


\begin{proof}
  If $S'\subset S$, then $S'$ is based on some $Y'\neq Y$. 
Since $S'\subset S=pr_{Y}\m(\{ u\})$, we have $Y'\inc pr_{Y}\m(\{ u\})$ and $u_c$ is between $Y_c$ and $Y'_c$ in $T_c$. 
The result follows since $\Phi([[S]])$ is $1/2$-close to $Y_c$ and $\Phi([[S']])$ is $1/2$-close to $Y'_c$.

If $S\subset S'$ and $Y\neq Y'$, we have   $pr_Y(Y')\neq u$ because $S'\ne T$, and the lemma follows.
If $Y=Y'$, the lemma is immediate.
\end{proof}

We  can now show that $\Phi$ preserves betweenness.
Consider three distinct cross-connected components $C_1,C_2,C_3\in \calh$.  Let $Y_2$ be the cylinder on which $C_2$ is based. Note that $| \bo Y_2 |\geq 4$ if $C_2$ is non-peripheral.

First assume that    $C_2$ is between $C_1$ and $C_3$.
By definition, there exist \ais{s} $X_i$  representing $C_i$ such that $X_1\subset X_2\subset X_3$. 
By Lemma \ref{lem_prelim_ai}, one can find special forests $S_i$  with $[X_{S_i}]=[X_i]$. 
By Remark \ref{rem_disj}, since the $C_i$'s are   distinct, 
one can assume $S_1\subset S_2\subset S_3$
(if necessary, replace $S_2$ by $S_2\cap S_3$, then $S_1$ by  $S_1\cap S_2\cap S_3$).

 If  $S_2$ is peripheral, then $\Phi(C_1)$ and $\Phi(C_3)$ are in distinct components
of $T_c\setminus\{\Phi(C_2)\}$ by Lemma \ref{lem_between2}, 
so $\Phi(C_2)\in [\Phi(C_1),\Phi(C_3)]$.
If $S_2^*$ is peripheral,  we apply the same  argument using $S_3^*\subset S_2^*\subset S_1^*$.

Assume   therefore that $C_2$ is non-peripheral.  By Lemma \ref{lem_between1}, the points 
   $\eta_1=\eta_{Y_2}(S_1)$ and $\eta_3=\eta_{Y_2}(S_3^*)$
 are   defined, and $\eta_1\in S_2$ and $\eta_3\in S_2^*$. 
In particular, $\eta_1\neq \eta_3$. By the ``moreover'' we get
 $\Phi(C_2)\in [\Phi(C_1),\Phi(C_3)]$ since   $\Phi(C_2)=( Y_2)_c$.

Now assume that $C_2$ is not between $C_1$ and $C_3$, and choose $S_i$ with $[[S_i]]=C_i$.
By Remark  \ref{rem_disj}, 
we may assume that for each $i\in\{1,3\}$ some inclusion $S_i^{(*)}\subset S_2^{(*)}$ holds. 
  Since $C_2$ is not between $C_1$ and $C_3$,
we may assume after  changing $S_i$ to $S_i^*$ if needed  that $S_1\subset S_2$ and $S_3\subset S_2$.

If $S_2$ or $S_2^*$ is peripheral, Lemma \ref{lem_between2} implies that $\Phi(C_1)$ and $\Phi(C_3)$
lie in the same connected component  of $T_c\setminus\{\Phi(C_2)\}$, so $\Phi(C_2)$ is not between $\Phi(C_1)$ and $\Phi(C_3)$.

Assume therefore  that $C_2$ is non-peripheral. 
By  Lemma \ref{lem_between1},  the points 
 $\eta_1=\eta_{Y_2}(S_1)$ and $\eta_3=\eta_{Y_2}(S_3)$ are defined, and 
we may assume $S_i\subset pr_{Y_2}\m(\{\eta_i\})$.
If $\eta_1=\eta_3$, then $\Phi(C_2)$ does not lie between $\Phi(C_1)$ and $\Phi(C_3)$ by the ``moreover'' of Lemma \ref{lem_between1}.
If $\eta_1\neq \eta_3$, consider $\tilde S_2$ with $[[\tilde  S_2]]= C_2$ such that $\eta_1\in \tilde S_2$ and $\eta_3\in \tilde S_2^*$
 (it exists by Remark \ref{rem_sep}). Then $S_1\subset pr_{Y_2}\m(\eta_1)\subset \tilde S_2$
and  $S_3\subset pr_{Y_2}\m(\eta_3)\subset \tilde S_2^*$
so $C_2$ lies between $C_1$ and $C_3$, a contradiction.

This ends the proof of Theorem \ref{thm_RN}.

\subsection{Quadratically hanging vertices}
\label{sec_QH}

We say that a vertex stabilizer $G_v$  of $T$ is a \emph{QH-subgroup} if there is an exact sequence  $1\ra F\ra G_v\xra{\pi} \Sigma \ra1$, where 
 $\Sigma =\pi_1(\calo)$ 
is  a  hyperbolic $2$-orbifold group  and every incident edge group $G_e$  is peripheral:
 it is contained with finite index in the preimage by $\pi$ 
of a boundary subgroup $B=\pi_1(C)$, with $C$ a boundary component of $\calo$. 
We say that $v$ is a \emph{QH-vertex} of $T$.

We now define almost invariant sets based on $v$. They will be included  in our description of the regular neighbourhood.

We view $\Sigma$ as a convex cocompact Fuchsian group  acting on $\bbH^2$.
Let $\bar H$ be any non-peripheral maximal two-ended subgroup of $\Sigma$ (represented by an immersed curve  or $1$-suborbifold). 
 Let $\gamma$ be the geodesic   invariant by $\bar H$. It separates $\bbH^2$ into two half-spaces   $P^\pm$  
(which may be interchanged by certain elements of $\bar H$). 

Let $\bar H_0$ be the stabilizer of $P^+$, which has index at most $2$ in $\bar H$, 
and $x_0$   a basepoint.
We define an $\bar H_0$-almost invariant set  $\bar X\inc\Sigma$ as the set of $g\in \Sigma$ such that $gx_0\in P^+$ (if $\bar H $ is the fundamental group of a two-sided simple closed curve on $\calo$, 
there is a one-edge splitting of $\Sigma$ over $\bar H$, and $\bar X$ is a $Z_e$  as defined  in   Subsection \ref{sec_bas}).

 The preimage of $\bar X$ in $G_v$ is an almost invariant set $X_v$ over the preimage $H_0$ of $\bar H_0$.
 We extend it to an almost invariant set $X$ of $G$ as follows. Let
 $S'$ be the set of vertices $u\ne v$ of $T$ such that, denoting by
 $e$ the initial edge of the segment $[v,u] $, the geodesic of
 $\bbH^2$ invariant under $G_e\inc G_v$ lies in $P^+$ (note that it lies  in  either   $P^+$ or  $P^-$). Then $X$ is the
 union of $X_v$ with the set of $g\notin G_v$ such that $gv\in S'$.
 
Starting from $\bar H$, we have thus constructed an almost invariant set $X$,
 which is well-defined up to equivalence and complementation (because of the choices of $x_0$ and $P^\pm$).
 We say that   $X$ is a \emph{QH-\ais} based on $v$. We let $\Seif_v(T)$
 be the set of equivalence classes of QH-\ais{s}\ obtained from $v$ as
 above (varying $\bar H$), and $\Seif(T)$ be the union of all
 $\Seif_v(T)$ when $v$ ranges over all QH-vertices of $T$.

 % Consider $H\subset G_v$ the preimage of a two-ended subgroup $\ol
 % H$ of $\Sigma$, non-commensurable to a boundary component of
 % $\Sigma$.  Up to changing $\ol H$ to a subgroup of index $2$,
 % consider $P\subset \bbH^2$ a half-plane stabilised by $\ol H$.  One
 % can construct a non-trivial $H$-\ais\ $X\subset G$ as follows.
 % Fixing some $x_0\in C$, let $X_v=\{g\in G_v| g.x_0\in P\}$, an
 % $H$-\ais\ of $G_v$.  Consider the set $S'$ of all vertices $u\in
 % T\setminus \{v\}$ such that the boundary component of $C$
 % determined by the initial edge of $[v,gv]$ lies in $P$.  Then $X_v$
 % extends to an almost invariant set $X$ of $G$ by setting, for
 % $g\notin G_v$, $g\in X$ iff $g.v\in S'$.

\begin{thm}\label{thm_RN2} With $\cale$ and $T$ as in Theorem \ref{thm_RN},   
let $\hat\calx=\calb(T)\cup \Seif(T)$.
Then $RN(\hat\calx)$ is isomorphic to a subdivision of $T_c$.
 \end{thm}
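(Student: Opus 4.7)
The plan is to extend the bijection $\Phi\colon \calh(\calb(T))\to T_c$ from the proof of Theorem \ref{thm_RN} to a bijection $\hat\Phi\colon \calh(\hat\calx)\to T_c^{\mathrm{subd}}$ onto a further subdivision of $T_c$, by placing the additional cross-connected components coming from QH-vertices at suitable locations and verifying that betweenness is preserved.

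The first step is a no-crossing lemma: a QH-\ais\ $X \in \Seif_v(T)$ does not cross any $X_S\in\calb(T)$. The key observation is that $X$ is, modulo $G_v$-finite error, a union of complete components of $T\setminus\{v\}$, namely those $T_e$ for edges $e$ at $v$ whose stabilizer's invariant geodesic in $\bbH^2$ lies in the chosen half-plane $P^+$, together with a local part inside $G_v$. Given a special forest $S$ based on a cylinder $Y$, a case analysis on the position of $v$ (whether $v\in Y$, and whether $v\in S$ or $v\in S^*$) shows that at least one of the four intersections $X^{(*)}\cap X_S^{(*)}$ is small: the finiteness of $\cobo S$ in $Y$ forces $S$ to interact in a controlled way with the components of $T\setminus\{v\}$, preventing all four intersections from being non-small. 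An analogous localization argument shows that QH-\ais{s}\ based on distinct QH-vertices $v\ne v'$ do not cross either: a set in $\Seif_v(T)$ is contained (up to small) in the component of $T\setminus\{v'\}$ containing $v$, or in its complement.

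The second step describes the QH cross-connected components. For a fixed QH-vertex $v$, two QH-\ais{s}\ at $v$ cross iff the corresponding non-peripheral two-ended subgroups of $\Sigma_v=\pi_1(\calo_v)$ have axes in $\bbH^2$ that cross, since the four intersections of the \ais{s}\ correspond to the four regions $P_1^\pm\cap P_2^\pm$ being non-empty. Because $\Sigma_v$ is non-elementary, any two non-peripheral axes in $\bbH^2$ can be joined by a chain of pairwise crossing axes (by density of axes), so all QH-\ais{s}\ at $v$ form a single CCC $C^{QH}_v$. Combined with the first step, this gives $\calh(\hat\calx)=\calh(\calb(T))\sqcup\{C^{QH}_v\}_v$ where $v$ ranges over QH-vertices with $\Seif_v(T)\ne\es$.

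Finally, I extend $\Phi$ to $\hat\Phi$ by placing $C^{QH}_v$ at a point of $T_c$ associated to $v$: if $v$ belongs to at least two cylinders then $v_c\in V_0(T_c)$ and I set $\hat\Phi(C^{QH}_v)=v_c$ (this vertex was previously not in the image of $\Phi$, corresponding to a ``star'' in $RN(\calb(T))$, so placing a CCC there is a genuine further subdivision); if $v$ is interior to a single cylinder $Y$, then I insert $C^{QH}_v$ at an appropriate new subdivision point in an edge of $T_c$ incident to $Y_c$. Betweenness follows from the explicit description of a QH-\ais\ as a union of components of $T\setminus\{v\}$: $C^{QH}_v$ is between $C_1$ and $C_3$ precisely when $v$ separates special forests representing $C_1$ and $C_3$ in $T$, which corresponds to $v_c$ separating $\hat\Phi(C_1)$ from $\hat\Phi(C_3)$ in $T_c$. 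The main obstacle is the no-crossing lemma, whose case analysis must cover configurations where $v\in\partial Y$ and $\cobo S$ contains edges at $v$; handling a QH-vertex interior to a single cylinder (orbifold with one boundary component) also requires care, as the natural location $v_c$ does not exist in $T_c$.
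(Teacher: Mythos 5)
Your overall strategy matches the paper's: extend $\Phi$ to $\hat\Phi$, establish a no-crossing lemma between $\Seif(T)$ and $\calb(T)$ and between distinct $\Seif_v(T)$'s, identify one new CCC per QH-vertex, and verify betweenness. However several steps are left as sketches, and one of your worries is a phantom.

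The case ``$v$ is interior to a single cylinder $Y$'' cannot occur, so the careful handling you flag as ``requiring care'' is unnecessary. Indeed, the orbifold $\calo$ must have a nonempty boundary (otherwise $v$ has no incident edges and $T=\{v\}$), hence $G_v$ has infinitely many incident edges in $T$; distinct incident edge stabilizers are (up to finite index) distinct peripheral subgroups of $\Sigma$, which are pairwise non-commensurable because non-conjugate maximal two-ended subgroups of a non-elementary Fuchsian group have distinct limit sets. So $v$ belongs to infinitely many cylinders and $v_c\in V_0(T_c)$ is always available — this one-line observation is what the paper uses. By missing it you introduced a spurious case that you then cannot resolve.

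For the no-crossing lemma you defer to ``a case analysis'' and explicitly acknowledge the hardest configuration is unresolved. The paper's actual argument hinges on the structural fact that the QH-forest $S$ (the union of $\{v\}$ with the components of $T\setminus\{v\}$ picked out by $P^+$) is a \emph{union of cylinders}; from this one deduces that a special forest $S'$ disjoint from $\{v\}$ that meets $S$ is entirely contained in $S$, and crossing is then excluded by passing to complements. Without that lemma your case analysis has no leverage. Likewise, your betweenness verification is asserted rather than proved: the direction ``if $C_2$ is not between $C_1$ and $C_3$ then $\hat\Phi(C_2)\notin[\hat\Phi(C_1),\hat\Phi(C_3)]$'' when $C_2=\Seif_v(T)$ requires, in the case the first cylinders $Y_1^0\ne Y_3^0$ crossed on the way out of $v$ are distinct, producing a non-peripheral two-ended $\bar H\subset\Sigma$ whose invariant geodesic separates the two corresponding peripheral geodesics in $\bbH^2$ — this is the geometric input that lets you re-choose the representative of $C_2$ so as to actually realize betweenness and reach a contradiction. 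This and a cylinder-valued analogue of the projection $\eta_Y$ (the paper's Lemma 3.14) are the content you've skipped.

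Your density-of-axes argument that all QH-\ais{s} at a given $v$ form a single CCC is a reasonable addition (the paper simply asserts this), modulo the standard but non-trivial fact that between any two non-peripheral closed geodesics on a hyperbolic orbifold one can interpolate a chain of pairwise crossing non-peripheral closed geodesics.
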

 
 %Note that $S$ determines $H$ and $H_0$ since $H_0$ is the stabilizer of $S$.

 \begin{proof}
   The proof is similar to that of Theorem \ref{thm_RN}.

If $X$ is a QH-\ais\ as constructed above, we call $S=S'\cup\{v\}$   the   \emph{QH-forest} associated to $X$. We say that it is based on $v$.  The coboundary of $S$ is infinite, but all its edges contain $v$. We may therefore view $S$ as a subtree of $T$ (the union of $v$ with certain components of $T\setminus\{v\}$). It is a union of cylinders. We let $S^*=(T\setminus S)\cup\{v\}$, so that $S\cap S^*=\{v\}$. 

 Note that $S$ cannot contain a peripheral special forest $S_{v,Y}$, with $Y$ a cylinder  containing $v$ 
 (this is because the subgroup $\bar H\subset \Sigma$ was chosen non-peripheral).
  
 Conversely, given a QH-forest $S$, one can recover $H_0$, which is the    stabilizer of $S$, and the equivalence class of $X$. In other words, there is a bijection between $QH_v(T)$ and the set of QH-forests based on $v$. 
 We   denote by $X_S$ the   almost invariant set $X$ corresponding to $S$ (it is well-defined up to equivalence). 
 Note that  $X_S\subsetneq \{g\in G \mid gv\in S\}$, and these sets  have the same intersection with $
 G\setminus G_v$. 
 
The following fact is analogous to Lemma \ref{lem_prelim2_ai}.
 
 \begin{lem}\label{lem_for}
 Let $S$ be a QH-forest based on $v$. Let $S'$ be a non-trivial special forest, or a QH-forest based on $v'\ne v$. 
 %We show  the following fact, analogous to Lemma \ref{lem_prelim2_ai}: 
\begin{enumerate}
\item  $X_S\cap X_{S'}$ is small if and only if $S\cap S'=\es$.
\item $X_S$ and $ X_{S'}$ do not cross.
\end{enumerate}
   \end{lem}
   
   \begin{proof}
When $S'$ is a special forest, we use $v$ as a basepoint to define   $X_{S'}=\{g\mid gv\in S'\}$. Beware that     
$X_S$ is properly contained in $\{g\mid gv\in S\}$.

We claim that, if $S'$ is a special forest with $v\notin S'$ and $S\cap S'\neq\es$, then $X_{S'}\subset X_S$. 
Indeed, let $Y'$ be the cylinder on which $S'$ is based.
Since each connected component of $S'$ contains a point in $Y'$, there is a point $w\ne v$ in $S\cap Y'$.
As $S$ is a union of cylinders, $S$ contains $Y'$. All connected components of $S'$ therefore  contain a point of $S$, so are   contained in $S\setminus\{v\}$
  since $v\notin S'$. We deduce  $X_{S'}\subset X_S$.

We now prove assertion 1. If $S\cap S'=\es$, then $X_S\cap X_{S'}=\es$.
We assume     $S\cap S'\neq \es$, and we show that $X_S\cap X_{S'}$ is not small. 
If $S'$ is a QH-forest, then $v\in S'$ or $v'\in S$.
If for instance $v\in S'$, then $X_S\cap X_{S'}$ is not small because it contains $X_S\cap G_v$. 
Now assume that $S'$ is a special forest. If $v\in S'$, the same argument applies,
so assume $v\notin S'$. The claim implies $X_{S'}\subset X_S$, so $X_S\cap X_{S'}$ is not small.

To prove 2, first consider the case where $S'$ is a QH-forest.
Up to changing $S$ and $S'$ to $S^*$ or $S'^*$, one can assume $S\cap S'=\es$
so $X_S$ does not cross $X_{S'}$.
If $S'$ is a special forest, we can assume $v\notin S'$ by changing $S'$ to $S'^*$. By the claim,
$X_S$ does not cross $X_{S'}$.
\end{proof}

The lemma implies that no element of $\Seif(T)$ crosses an element of $\calb(T)$, 
and  elements of $\Seif_v(T)$ do not cross elements of $\Seif_{v'}(T)$ for $v\neq v'$.  
 
Since    $\Seif_{v}(T)$ is a cross-connected component, % of $\calb(T)\cup\Seif(T)$.
the set $\Hat\calh$ of cross-connected components of $\calb(T)\cup \Seif(T)$ is therefore  the set of cross-connected components of $\calb(T)$,
together with one new cross-connected component $\Seif_v(T)$ for each QH-vertex $v$.

One extends the map $\Phi$ 
defined in the proof of Theorem \ref{thm_RN}
to a map $\Hat \Phi:\Hat\calh\ra T_c$  by sending $\Seif_v(T)$ to $v$
(viewed as a vertex of $V_0(T_c)$ since a QH-vertex belongs to infinitely many cylinders).
We need to prove that $\Hat \Phi$ preserves betweenness.

Lemmas \ref{lem_between1} and \ref{lem_between2} extend immediately to the case where $S'$ is 
a QH-forest: one just needs to define  $\eta_Y(S')=pr_Y(v')$ for $S' $ based on $v'$,
 so that $v'$ plays the role of $Y'$ in the proofs
(in the proof of \ref{lem_between1}, the assertion that  $\eta\notin S'$ should be replaced by the fact that $S'\cap Y$ contains no edge;  this
   holds since otherwise  $S'$ would contain $Y$).
This allows to prove that, if $C_2$ is not a component $\Seif_v(T)$, then $\Phi(C_2)$ is between $\Phi(C_1)$ and $\Phi(C_3)$ 
if and only if $C_2$ lies between $C_1$ and $C_3$.

To deal with the case when $C_2=\Seif_v(T)$, we need a cylinder-valued projection $\eta_v$.
Let $Y$ be a cylinder or a QH-vertex distinct from $v$.
We define $\eta_v(Y)$ as the cylinder of $T$ containing the initial edge of $[v,x]$ for any  $x\in Y$ different from $v$.
Equivalently, $\eta_v(Y)$ is $Y$ if $v\in Y$,   the cylinder containing the initial edge
of the bridge joining $x$ to $Y$ otherwise.

  If $v$ lies in a cylinder $Y^0$, denote by $\eta_v\m(Y^0)$ the union of cylinders $Y$ such that $\eta_v(Y)=Y^0$.
Equivalently, this is the set of points $x\in T$ such that $x=v$ or $[x,v]$ contains an edge of $Y^0$.

As before, we denote by $[[S]]$ the cross-connected component represented by $X_S$.

\begin{lem}\label{lem_between3} 
 Let $S$ be a QH-forest based on $v$. Let $S'$ be a non-trivial special forest, or a QH-forest based on $v'\ne v$. 
Let $Y'$ be the cylinder or QH-vertex on which $S'$ is based, and
 $Y'^0=\eta_v(Y')$.
 
If $S'\inc S$,
then $S'\subset \eta_v\m(Y'^0)\subset S$. 

Moreover,
$\Phi([[S']])$ and $Y'^0_c$ lie in the same component of $T_c\setminus \{\Phi([[S]])\}$.
\end{lem}

We leave the proof of  this lemma to the reader.

Assume now that $S_1\subset S_2 \subset S_3$ with $[[S_i]]=C_i$ and $S_2 $ based on $v$.
For $i=1,3$ let $Y_i^0=\eta_v(Y_i)$.
Then $S_1 \subset \eta_v\m(Y_1^0)\subset S_2$ and $S_3^* \subset \eta_v\m(Y_3^0)\subset S_2^*$.
In particular, $Y_1^0\neq Y_3^0$. Since $ (Y_1^0)_c$ and $ (Y_3^0)_c$ are neighbours of $v_c$,
they  lie in distinct components of $T_c\setminus \{\Phi(C_2)\}$.
By Lemma \ref{lem_between3}, so do $\Phi([[S_1]])$ and $\Phi([[S_3]])$.

Conversely, assume that $C_2$ does not lie between $C_1$ and $C_3$, and consider
$S_1\subset S_2$ and $S_3\subset S_2$ with  $[[S_i]]=C_i$.
For $i=1,3$, let $Y_i^0$ be as above.
If $Y_1^0=Y_3^0$, then $\Phi(C_2)$ is not between $\Phi(C_1)$ and $\Phi(C_3)$ by Lemma \ref{lem_between3}, and we are done. 
If $Y_1^0\neq Y_3^0$, these cylinders  correspond to distinct peripheral subgroups of $G_v$, with  invariant geodesics $\gamma_1\ne \gamma_3$. 
There exists a non-peripheral group $\bar H\inc \Sigma$, as in the beginning of this subsection,  
whose invariant geodesic separates $\gamma_1$ and $\gamma_3$. Let $S'_2$ be the associated QH-forest. Then
  $[[S'_2]]=C_2$ and, up to complementation, $\eta_v\m(Y_1^0)\subset S'_2$ and $\eta_v\m(Y_3^0)\subset S'_2{}^*$.
It follows that $S_1\subset S'_2$ and $S_3^*\subset S'_2{}^*$, so $C_2$ lies between $C_1$ and $C_3$, contradicting our assumptions.
 \end{proof}

\section{The regular neighbourhood of Scott and Swarup }
\label{sec_sco}

A group is $\VPC_n$ if some finite index subgroup is polycyclic of Hirsch length $n$. For instance,   $\VPC_0$ groups are finite groups, $\VPC_1$ groups are virtually cyclic groups, $\VPC_2$ groups are virtually $\Z^2$ (but not all $\VPC_n$ groups are virtually abelian for $n\ge3$).

Fix $n\ge1$. We assume that $G$ is finitely presented and does not split over a
    $\VPC_{n-1}$ subgroup. 
 We also assume that $G$ itself is not $\VPC_{n+1}$.
All trees considered here are assumed to have  
    $\VPC_n$ edge stabilizers.

A   subgroup $H\subset G$ is \emph{universally elliptic} if it is elliptic in every
 tree. A
 tree is universally elliptic if all its edge stabilizers are.

A tree is a \emph{JSJ tree} (over $\VPC_n$ subgroups) if it is universally elliptic, and maximal for
this property: it dominates every universally elliptic tree. JSJ trees   exist (because $G$ is finitely presented) and
  belong to the same deformation space, called
the  JSJ deformation space  (see \cite{GL3}). 

 A vertex stabilizer $G_v$ of a JSJ tree is \emph{flexible} if it is not $\VPC_n$ and is not
universally elliptic. It follows from \cite{DuSa_JSJ} that a flexible vertex stabilizer is a QH-subgroup (as defined in Subsection \ref{sec_QH}): 
 there is an exact sequence $1\to F\ra G_v\ra \Sigma\to 1$, where 
$\Sigma =\pi_1(\calo)$  is the fundamental group of a  hyperbolic $2$-orbifold,  
$F$ is $\VPC_{n-1}$, and every incident edge group $G_e$ is peripheral.  Note that the QH-almost invariant subsets $X$ constructed in Subsection \ref{sec_QH} are over $\VPC_n$ subgroups.

We can now describe the regular neighbourhood of all \ais{s} 
of $G$ over $\VPC_n$ subgroups as a tree of cylinders.

\begin{SauveCompteurs}{thm_SS}
  \begin{thm}\label{thm_SS}
    Let $G$ be a finitely presented group, and $n\ge1$. Assume that $G$ does not split over a
    $\VPC_{n-1}$ subgroup, and that $G$ is not $\VPC_{n+1}$.  
     Let $T$ 
      be a  JSJ tree  over $\VPC_n$ subgroups, and
    let $T_c$ be its tree of cylinders for the commensurability
    relation.

    Then  Scott and Swarup's
    regular neighbourhood of all almost invariant subsets over
    $\VPC_n$ subgroups is  equivariantly isomorphic to a subdivision of $T_c$.
  \end{thm}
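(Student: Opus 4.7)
The plan is to derive Theorem \ref{thm_SS} directly from the two main ingredients already stated: the identification of \ais{s} (Theorem \ref{thm_ai}) and the identification of the regular neighbourhood with the tree of cylinders (Theorem \ref{thm_RN2}). As the authors already signal, no new construction is needed here; the work is to check that the hypotheses match.

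First I would set up the ambient family of subgroups. Take $\cale$ to be the conjugacy-invariant family of all subgroups of $G$ commensurable to some $\VPC_n$ subgroup. Closure under commensurability is built in, and the key point is the second condition of Definition \ref{dfn_ce}: if $A \subset B$ are both in $\cale$, then $[B:A] < \infty$. This follows from the fact that virtually polycyclic groups of the same Hirsch length $n$ must be commensurable when one contains the other: passing to finite-index polycyclic subgroups, an inclusion with infinite index would strictly drop the Hirsch length. With this $\cale$, the JSJ tree $T$ is a minimal $\cale$-tree (after passing to its unique minimal $G$-invariant subtree, which does not change the JSJ deformation space).

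Next, I would identify the collection $\calx$ of equivalence classes of all \ais{s} over $\VPC_n$ subgroups with $\hat\calx := \calb(T)\cup\Seif(T)$. The inclusion $\hat\calx \subset \calx$ is by construction: elements of $\calb(T)$ lie in $\calb_\calc$ for some commensurability class $\calc$ of $\VPC_n$ edge stabilizers, and elements of $\Seif(T)$ were explicitly built in Subsection \ref{sec_QH} as \ais{s} over $\VPC_n$ subgroups (the preimages in QH vertex groups of two-ended subgroups of the orbifold $\pi_1(\calo)$, which are $\VPC_n$ since the fiber $F$ is $\VPC_{n-1}$). The reverse inclusion $\calx \subset \hat\calx$ is precisely the content of Theorem \ref{thm_ai}.

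Since $\calx$ and $\hat\calx$ consist of the same equivalence classes of \ais{s}, the cross-connected components, stars, and betweenness relations used to define the regular neighbourhood coincide, hence $RN(\calx) = RN(\hat\calx)$ as $G$-graphs. Applying Theorem \ref{thm_RN2} to $T$ and $\cale$ identifies $RN(\hat\calx)$ equivariantly with a subdivision of $T_c$, completing the proof.

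The main obstacle, conceptually, is not in this combination — which is essentially a matching of definitions — but lies upstream, in Theorem \ref{thm_ai}, whose proof (to be given in Section \ref{sec_sco}) depends on the algebraic torus theorems of \cite{DuSw_algebraic, DuRo_splittings} and is where the finite presentation of $G$, the non-splitting hypothesis over $\VPC_{n-1}$, and the non-$\VPC_{n+1}$ hypothesis on $G$ are actually needed. Here in Theorem \ref{thm_SS} these hypotheses enter only through their role in guaranteeing the existence and relevant properties of $T$ and the conclusion of Theorem \ref{thm_ai}; once those are in hand, the argument is a few lines as above.
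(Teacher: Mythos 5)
Your proposal matches the paper's proof: Theorem~\ref{thm_SS} is stated there to be immediate from Theorem~\ref{thm_RN2} together with Theorem~\ref{thm_ai}, which is exactly the combination you make. You simply spell out the routine verifications the paper leaves implicit (the choice of $\cale$, the equality $\calx=\calb(T)\cup\Seif(T)$ via the two inclusions), and those checks are correct.
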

\end{SauveCompteurs}

This is  immediate    from Theorem \ref{thm_RN2} and the following result saying  that one can read 
any almost invariant set over a $\VPC_n$  subgroup in a  JSJ tree $T$.

\begin{SauveCompteurs}{thm_ai}
\begin{thm}[{\cite{DuSw_algebraic},\cite[Th. 8.2]{ScSw_regular}}]\label{thm_ai}
  Let $G$ and $T$  be as above.
  %Let $T$ be a   JSJ tree   over $\bar\cale$.
For any \ais\ $X$ over a $\VPC_n$ subgroup,  
the equivalence class $[X]$   belongs to $ \calb(T)\cup\Seif(T)$.  
\end{thm}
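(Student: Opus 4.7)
The plan is to analyze a non-trivial \ais\ $X$ over a $\VPC_n$ subgroup $H$ by dichotomizing on whether some translate $gX$ crosses $X$ (in the sense of Subsection \ref{sec_ccc}). The two cases will feed into the two pieces $\calb(T)$ and $\Seif(T)$ of the target.

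In the \emph{nested} case (no translate $gX$ crosses $X$), a standard construction due to Dunwoody produces a one-edge splitting $T_X$ of $G$ with edge stabilizer a subgroup $H' \subset G$ commensurable to $H$, such that $[X]$ is the class of the \ais\ $Z_e$ of Subsection \ref{sec_bas} associated to the unique edge orbit of $T_X$. In particular $T_X$ is a $\VPC_n$-tree. The next step is to show that $T_X$ is universally elliptic among $\VPC_n$-trees: if some edge stabilizer $G_{e'}$ of another $\VPC_n$-tree $T'$ failed to be elliptic in $T_X$, then $G_{e'} \cap H'$ would have infinite index in both $H'$ and $G_{e'}$, hence be $\VPC_m$ for some $m \le n-1$, and combining the two one-edge splittings via a Bass-Serre / accessibility argument would produce a splitting of $G$ over a $\VPC_{n-1}$ subgroup, contradicting the standing hypothesis. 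Universal ellipticity of $T_X$ together with the JSJ property of $T$ then forces $T$ to dominate $T_X$, and Proposition \ref{prop_bool} gives $[X] = [Z_e] \in \calb(T_X) \subset \calb(T)$.

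In the \emph{non-nested} case some translate $gX$ crosses $X$. I would then invoke the algebraic torus theorem of Dunwoody--Swenson \cite{DuSw_algebraic} (together with its refinement \cite{DuRo_splittings}): the existence of such a crossing forces $H$ to be contained in a QH subgroup $G_v \subset G$ sitting in an exact sequence $1 \to F \to G_v \to \Sigma \to 1$ with $F$ a $\VPC_{n-1}$ group and $\Sigma = \pi_1(\calo)$ a hyperbolic $2$-orbifold group, and $[X]$ corresponds to an essential immersed $1$-suborbifold on $\calo$, extended to $G$ by tracking whether projections into $\bbH^2$ lie in the half-plane $P^+$ exactly as in Subsection \ref{sec_QH}. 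The remaining task is to identify $G_v$ with the stabilizer of a QH-vertex of the JSJ tree $T$: by \cite{DuSa_JSJ}, under our hypotheses every flexible vertex stabilizer of a $\VPC_n$ JSJ tree is of exactly this QH form, and any maximal QH subgroup of $G$ accommodating a self-crossing \ais\ over a $\VPC_n$ group must be conjugate to a JSJ vertex stabilizer. This matches $[X]$ with a QH-\ais\ based on some QH-vertex $v$ of $T$, so $[X] \in \Seif_v(T) \subset \Seif(T)$.

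The main obstacle is the non-nested case. Invoking the algebraic torus theorem requires the finite presentability of $G$ (for the resolution-type arguments in \cite{DuSw_algebraic,DuRo_splittings}) and the assumption that $G$ is not itself $\VPC_{n+1}$ (to rule out a degenerate enclosure in which $G_v = G$ and $\Sigma$ is a closed $2$-orbifold). Matching the abstract QH enclosure produced algebraically with a QH vertex of the combinatorial JSJ tree of \cite{DuSa_JSJ} also requires the delicate characterisation of flexible vertices in the JSJ deformation space. By contrast, the nested case reduces to bookkeeping once universal ellipticity of $\VPC_n$-splittings is established via the no-$\VPC_{n-1}$-splitting hypothesis.
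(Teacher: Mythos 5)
Your dichotomy (some translate $gX$ crosses $X$, versus none does) is not the one the paper uses, and this leads to a genuine gap. The paper's dichotomy is whether $X$ \emph{strongly crosses some other} almost invariant set over a $\VPC_n$ subgroup, and this difference matters: an almost invariant set $X$ dual to a two-sided \emph{embedded} simple closed curve on a QH orbifold is nested with all of its translates (disjoint curves give non-crossing translates), so it falls into your ``nested'' case, yet $[X]$ belongs to $\Seif(T)$ and \emph{not} to $\calb(T)$ --- blowing up the QH vertex along that curve creates an edge whose stabilizer is not commensurable to any edge stabilizer of $T$, so the resulting $Z_e$ is not a Boolean combination of the $Z_e$'s of $T$. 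Your nested branch would force $[X]\in\calb(T)$, a wrong conclusion.

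The reason your argument does not detect this is that the universal ellipticity step is flawed. You claim that if an edge group $G_{e'}$ of another $\VPC_n$-tree $T'$ is not elliptic in $T_X$, then ``combining the two one-edge splittings'' produces a splitting of $G$ over a $\VPC_{n-1}$ subgroup. This deduction is false: when two $\VPC_n$-splittings cross, the correct dichotomy (this is the content of the algebraic torus theorem and of \cite[Lemme 11.3]{Gui_coeur}) is either a splitting over a smaller group \emph{or} a QH structure enclosing both splittings. The standard counterexample is a closed hyperbolic surface group with two crossing simple closed curves: both $\bbZ$-splittings fail to be mutually elliptic, yet the group does not split over the trivial (i.e.\ $\VPC_0$) group. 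So $T_X$ need not be universally elliptic, and you cannot conclude that $T$ dominates it.

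A further point: even in the genuinely non-QH situation, it is not automatic that $[X]$ equals a single $[Z_e]$ of a one-edge splitting. The paper is more careful here: from \cite{DuRo_splittings,DuSw_algebraic} it gets a one-edge tree $T_0$ with edge stabilizer merely \emph{commensurable} to $H$, uses universal ellipticity to find the relevant cylinder $Y\subset T$, and then works directly with the JSJ tree $T$ --- first enclosing $X$ in a vertex $v$ via a coloring argument (when $X$ crosses no $Z_e$ of $T$), showing that $X\cap G_v$ is trivial, and extracting a special forest with finite coboundary in $Y$. The subcase where $X$ \emph{does} cross some $Z_e$ of $T$ (weakly, not strongly) is handled separately by the finiteness of intersection numbers \cite{Sco_symmetry} and a Boolean decomposition $X=\bigcup X\cap Z_{e_1}^{\pm1}\cap\cdots\cap Z_{e_n}^{\pm1}$; your proposal does not address this subcase at all. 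The non-nested branch of your proposal is broadly aligned with the paper's strong-crossing branch, but the nested branch needs to be replaced entirely by this more careful analysis.
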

\end{SauveCompteurs}

\begin{proof} 
We essentially follow the proof by Scott and Swarup. % but give a slightly more direct argument.
For definitions, see \cite{ScSw_regular}. All trees considered here have $\VPC_n$ edge stabilizers.

Let $X$ be an \ais\ over a $\VPC_n$ subgroup $H$.  We assume that it is non-trivial.
 We first consider the case where  $X$ crosses strongly some other \ais. Then by
\cite[Proposition 4.11]{DuSw_algebraic}  $H$ is contained as a non-peripheral subgroup in a QH-vertex stabilizer $W$ of some   tree $T'$. 
 When acting on $T$, the group $W$   fixes a   QH-vertex $v\in T$   (see \cite{GL3}).

Note that $H$ is not peripheral in $G_v$, because  it is not peripheral in $W$.
Since $(G,H)$  
only has   $2$ co-ends \cite[Proposition 13.8]{ScSw_regular}, 
there are (up to equivalence) only two 
\ais{s} over subgroups commensurable with $H$  (namely $X$ and $X^*$), and therefore $[X]\in \Seif_v(T)$.

>From now on, we assume that $X$ crosses strongly no other \ais{}
 over a $\VPC_n$ subgroup.  
Then, by \cite{DuRo_splittings} and \cite[Section 3]{DuSw_algebraic}, 
there is a  non-trivial tree $T_0$ with one orbit of edges 
and an edge stabilizer   $H_0$ commensurable with $H$.

Since $X$ crosses strongly no other almost invariant set, $H$ and $H_0$ are universally elliptic
(see \cite[Lemme 11.3]{Gui_coeur}). 
In particular, $T$ dominates $T_0$. It follows that there is an edge of $T$ with stabilizer contained in $H_0$ (necessarily with finite index).  
This edge is contained in a cylinder $Y $ associated to the commensurability class of $H$.

 The main case is when $T$ has no edge $e$ such that   $Z_e$  crosses $X$  (see Subsection \ref{sec_bas} for the definition of $Z_e$).
The following lemma implies that $X$ is enclosed in some vertex $v$ of $T$.

\begin{lem} 
Let $G$ be finitely generated. 
   Let $X\inc G$ be a non-trivial almost invariant set over a finitely generated subgroup $H $. Let
  $T$ be a  tree with an action of $G$. If $X$ crosses no $Z_e$, then $X$ is enclosed in some vertex $v\in T$.
\end{lem}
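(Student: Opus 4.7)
The plan is to use the non-crossing hypothesis edge-by-edge to localize $X$ to a single vertex of $T$. I will first assign each edge a canonical ``side'' on which $X$ (essentially) sits, then argue these choices converge to one vertex, and finally deduce enclosure in that vertex.

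\textbf{Step 1 (canonical side of each edge).} Fix a basepoint $v_0\in T$ and consider $\pi\colon G\to V(T)$, $\pi(g)=gv_0$. For any edge $e$, the non-crossing hypothesis gives that at least one of the four sets $X^{(*)}\cap Z_e^{(*)}$ is $H$-finite. Because $X$ is non-trivial, neither $X$ nor $X^*$ can be small, and neither can $Z_e$ nor $Z_e^*$ (each $H$-finite half-tree would make $X$ small by intersection). A short case analysis then shows that for each edge $e$ we can choose an orientation so that, up to $H$-finite error, either $X\subset Z_e$ or $Z_e\subset X$. The assignment is $G$-equivariant since crossing and $H$-finiteness are preserved by the $G$-action.

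\textbf{Step 2 (convergence to a vertex).} The orientations on edges induce a ``flow'' on $T$: at any vertex $w$, at most one incident edge can point away from $w$ (otherwise two disjoint half-trees would both essentially contain $X$, forcing $X$ to be small). So the oriented edges either meet at a single vertex $v$ or converge toward an end $\omega$ of $T$. The second possibility has to be ruled out. Here one uses finite generation of $G$ and $H$: since $X$ is almost invariant under right multiplication by each of finitely many generators of $G$ and is left-invariant under the finitely generated $H$, only finitely many $H$-orbits of edges can contribute to the symmetric differences $X+Xs$. If the flow converged to $\omega$, then a generator $s$ translating $v_0$ towards $\omega$ would shift the would-be center along the ray, producing infinitely many inequivalent translates $Xs^n$, contradicting almost invariance.

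\textbf{Step 3 (enclosure in $v$).} Once $v$ is the common sink, every edge $e$ incident to $v$ is oriented toward $v$, so $X$ lies (up to $H$-finite) in the intersection of the $Z_e$'s, which collapses to cosets mapping to $v$. This forces $H\subset G_v$ (as $H$ preserves $X$ and its support is concentrated at $v$); moreover $X$ is equivalent to a set constructed by taking the restriction $X_0=X\cap G_v$, which is an $H$-almost invariant subset of $G_v$, and extending it across $T$ through the cosets mapping to each half-tree at $v$. By the standard definition, this is exactly what it means for $X$ to be enclosed in $v$.

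The main obstacle is Step 2: organizing the orientations into a coherent ``flow'' and ruling out convergence to an end. This is where the finite generation of $G$ (and $H$) is essential; without it one could have infinite oriented rays with no vertex of enclosure.
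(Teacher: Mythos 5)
Your overall plan---orienting each edge by the non-crossing hypothesis, noting that at most one edge can point away from any vertex, and concluding that either there is a sink vertex or the orientations converge to an end---is the same scheme as the paper's, and your Steps 1 and 3 correspond to what the paper does. The gap is in Step 2, where you rule out convergence to an end $\omega$. The claim that a generator $s$ translating $v_0$ towards $\omega$ would yield ``infinitely many inequivalent translates $Xs^n$, contradicting almost invariance'' is circular: almost invariance already guarantees that $Xs^n$ is equivalent to $X$ for every $n$, so to obtain a contradiction you would need an independent proof that the $Xs^n$ are pairwise inequivalent, which you do not give (and which cannot hold, since they really are equivalent). Moreover there is no reason such an $s$ should exist: the edge orientations are determined by comparing each $Z_e$ with the fixed set $X$, so they are not $G$-equivariant, and $\omega$ need not be $G$-invariant, let alone the attracting end of some generator. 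What actually closes the end case in the paper is Lemma 2.31 of Scott and Swarup: if all edges are oriented (with the same colour) towards an end, then $G$ lies in a bounded neighbourhood of $X$ in its Cayley graph, forcing $X$ to be trivial. That lemma is the missing ingredient in your Step 2; it does use finite generation, but not through a translate-counting argument of the sort you sketch.
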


\begin{proof} 
The argument follows a part of the proof of Proposition 5.7 in \cite{ScSw_regular,ScSw_erratum_regular}.

Given two  almost  invariant subsets, we use the notation $X\geq Y$ when $Y\cap X^*$ is small.
The non-crossing hypothesis says that  each edge $e$ of $T$ may be oriented  so that
$Z_e \geq X$ or $Z_e \geq X^*$.
If one can choose both orientations for  some $e$, then $X$ is equivalent to $Z_e$, so $X$ is enclosed
in both endpoints of $e$ and we are done.

We orient each edge of $T$ in this manner.
We color the edge blue or red, according to whether
$Z_e \geq X$ or $Z_e \geq X^*$.  No  edge can  have both colors.
If $e$ is an oriented edge, and if $e'$ lies in  $T_{e}^*$, then $e'$ is oriented towards $e$,
so that $Z_{e}\subset Z_{e'}$, and $e'$ has the same color as $e$.
In particular, given a vertex $v$, either all edges containing $v$ are oriented towards $v$,
or there exists exactly one edge containing $v$ and oriented away from $v$, and all edges containing $v$
have the same color.

If $v$ is as in the first case, $X$ is enclosed in $v$ by definition.
If there is no such $v$, then all edges have the same color and are oriented
towards an end of $T$.
By Lemma 2.31 of \cite{ScSw_regular}, $G$ is contained in the $R$-neighbourhood of $X$
for some $R>0$, so $X$ is trivial, a contradiction.
\end{proof}

 Let $v$ be a vertex of $T$ enclosing $X$. In particular $H\subset G_v$.
The set $X_v=X\cap G_v$ is an $H$-\ais\ of $G_v$ (note that $G_v$ is finitely generated). 
By \cite[Lemma 4.14]{ScSw_regular}, 
there is a subtree $S\subset T $ containing $v$,  with $ S\setminus\{v\}$ a union of components of   $T\setminus\{v\}$, such that 
$X$ is equivalent to $X_v\cup \{g\mid g.v\in S\setminus\{v\}\}$.

\begin{lem} The $H$-\ais\ $X_v$ of $G_v$ is trivial.
\end{lem}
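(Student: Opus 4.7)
The plan is a proof by contradiction: assume $X_v$ is a non-trivial $H$-almost invariant subset of $G_v$. By the JSJ structure theorem of \cite{DuSa_JSJ} recalled earlier in this section, $G_v$ falls into one of three types: (i) $G_v$ is $\VPC_n$, (ii) $G_v$ is a QH-subgroup, or (iii) $G_v$ is universally elliptic but neither $\VPC_n$ nor QH (``rigid''). I handle these cases in turn, using the universal ellipticity of $H$ in $G$ (obtained in Case~2 of the main argument) and the standing hypothesis that $X$ crosses no $Z_e$.

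Case (i) is immediate: by the hypothesis on $\cale$, $H\subset G_v$ has finite index, so $G_v$ is $H$-finite as a set and every subset is $H$-finite, contradicting non-triviality of $X_v$. In case (iii), I apply the algebraic torus theorem of \cite{DuSw_algebraic, DuRo_splittings} to the finitely generated group $G_v$: a non-trivial $\VPC_n$-almost invariant subset of $G_v$ produces either a non-trivial splitting of $G_v$ over a $\VPC_n$-subgroup relative to the incident edge groups, or QH-structure on $G_v$. A relative $\VPC_n$-splitting refines $T$ to a $\VPC_n$-tree of $G$ in which $G_v$ is no longer elliptic, contradicting the universal ellipticity of $G_v$; QH-structure on $G_v$ would, again by refinement, produce a JSJ tree with $v$ replaced by a QH-vertex, contradicting rigidity via JSJ maximality.

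The delicate case is (ii), with $v$ a QH-vertex and underlying orbifold $\calo$. Universal ellipticity of $H$ forces $H$ to be elliptic in every $\VPC_n$-tree of $G$ obtained by blowing up $v$ along a non-peripheral essential $1$-suborbifold of $\calo$; hence the image of $H$ in $\pi_1(\calo)$ must lie in a peripheral subgroup, so $H$ is commensurable with the stabilizer of some incident edge of $v$. Combining the enclosure formula $X\sim X_v\cup\{g\mid gv\in S\setminus\{v\}\}$ with the hypothesis that $X$ crosses no $Z_e$ then pins $X$ down, up to equivalence, as a Boolean combination of sets $Z_e$ for incident edges at $v$; choosing the basepoint $v_0=v$ and computing directly, one sees that $X_v=X\cap G_v$ reduces to an $H$-finite subset of $G_v$, contradicting non-triviality. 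This QH case is the main obstacle, since QH-vertices do in general admit non-trivial almost invariant subsets over peripheral subgroups, and what forces our $X_v$ to be trivial is the specific interaction between the enclosure of $X$ at $v$ and the no-$Z_e$-crossing hypothesis.
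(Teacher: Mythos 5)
The paper's proof is uniform across vertex types and hinges on one non-obvious step that your proposal omits entirely: after the algebraic torus theorem produces a $G_v$-tree $T_1$ (with one orbit of edges and edge stabilizer $H_1$ commensurable with $H$), one must show that every \emph{incident} edge group $G_e$ of $T$ at $v$ is elliptic in $T_1$, for otherwise $T_1$ cannot be used to blow up the vertex $v$ and refine $T$. The paper establishes this carefully via symmetry of strong crossing (\cite[Prop.~13.3]{ScSw_regular}) applied to the standing assumption that $X$ crosses strongly no other almost invariant set, together with \cite[Lemme~11.3]{Gui_coeur}. Your case (iii) simply asserts that the relative splitting of $G_v$ ``refines $T$'' without checking this ellipticity; without it the refinement need not exist. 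Moreover the contradiction you name in (iii) --- ``universal ellipticity of $G_v$'' --- is not the correct one: a refinement in which $G_v$ is no longer elliptic does not by itself contradict anything. The actual contradiction is with maximality of the JSJ deformation space, using that $H_1$ (commensurable with $H$, which is universally elliptic) is universally elliptic, so the refined tree is still universally elliptic and strictly dominates $T$.

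Your case (ii) has a genuine gap at the final step: ``one sees that $X_v=X\cap G_v$ reduces to an $H$-finite subset'' is asserted, not proved, and is not automatic. When $H$ is peripheral in a QH group, non-trivial almost invariant subsets over $H$ do exist in general (half-spaces cut off by a boundary geodesic, for instance), so one cannot conclude triviality of $X_v$ just from $H$ being peripheral and $X$ not crossing any $Z_e$. The paper avoids this pitfall precisely because its argument does not depend on the vertex type: in the QH case, too, one produces a $G_v$-tree $T_1$ with edge stabilizer commensurable with $H$, verifies the incident-edge ellipticity as above, refines $T$, and contradicts JSJ maximality. Overall, your decomposition by vertex type is both unnecessary and, in cases (ii) and (iii), incomplete in the same place: you never justify that the blow-up at $v$ is compatible with the edges of $T$.
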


\begin{proof}
Otherwise, by \cite{DuRo_splittings,DuSw_algebraic}, 
 there is a $G_v$-tree $T_1$ with one orbit of edges and an edge stabilizer   $H_1$ commensurable with $H$,
and   an edge $e_1\inc T_1$, such that
$Z_{e_1}$ lies (up to equivalence) 
in the Boolean algebra generated by the  orbit of $X_v$ under the commensurator of $H$ in $G_v$.

Note that $G_e$ is elliptic in $T_1$ for each edge $e$ of $T$ incident to $v$:
  by symmetry of strong crossing (\cite[Proposition 13.3]{ScSw_regular}), 
$G_e$ does not cross strongly any translate of $X$, and thus does not cross strongly $Z_{e_1}$, so
$G_e$ is elliptic in $T_1$ (\cite[Lemme 11.3]{Gui_coeur}). 
This ellipticity allows us to refine $T$ by creating new edges with stabilizer conjugate to $H_1$.
Since $H_1$ is universally elliptic, this contradicts the maximality of  the JSJ tree $T$. 
\end{proof}

After  replacing
$X$ by an equivalent \ais{} or its complement, and possibly changing $S$  to $(T\setminus S)\cup \{v\}$, 
we can assume that  $X =\{g\mid g.v\notin S  \}$.
  Recall that $Y$ is the cylinder defined by the commensurability class of $H$.

\begin{lem}\label{cl_Sigma}
 The   coboundary $\cobo S$, consisting  of edges $vw$ with $w\notin S$,  is a finite set of edges of $Y$.  
\end{lem}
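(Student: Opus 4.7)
The plan is to establish both conclusions of Lemma \ref{cl_Sigma} by using the almost invariance of $X$ to control the $H$-action on $\cobo S$. First I would observe that $S$ can be taken genuinely $H$-invariant, not just up to equivalence: $H\subset G_v$ fixes $v$ and permutes the components of $T\setminus\{v\}$, $S^*$ is a union of such components, and $X$ is literally $H$-invariant; since $Gv$ meets every component of $T\setminus\{v\}$ (by minimality of $T$, as the convex hull of $Gv$ is a $G$-invariant subtree), $H$-invariance of $[X]$ forces $H$-invariance of the set of components composing $S^*$. Consequently $H$ acts on the edge set $\cobo S$ with the stabilizer of an edge $e$ equal to $H\cap G_e$.

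To prove $\cobo S\subset Y$, fix $e=(v,w)\in\cobo S$ and choose $s\in G$ with $sv=v_+$ for some $v_+\in Gv$ lying in the component $C_e$ of $T\setminus\{v\}$ containing $w$ (such $v_+$ exists by minimality). A direct computation from $X=\{g:gv\notin S\}$ gives
\[
  X\triangle Xs^{-1}=\{g\in G:gv\text{ and }gv_+\text{ lie on opposite sides of }S\},
\]
which is $H$-finite by almost invariance. Because the geodesic from $v$ to $v_+$ lies inside $C_e\cup\{v\}$, its only edge in $\cobo S$ is $e$ itself, so up to an $H$-finite error (coming from $g$-translates of this geodesic crossing additional edges of $\cobo S$), this set equals $\{g:ge\in\cobo S\}$. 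Splitting along the fibers of $g\mapsto ge$, the latter decomposes into
\[
  \sum_{e'\in Ge\cap\cobo S}[G_{e'}:H\cap G_{e'}]
\]
distinct $H$-orbits. If $G_e$ is not commensurable with $H$, then both being $\VPC_n$ of the same Hirsch length forces $H\cap G_{e'}$ to have strictly smaller Hirsch length than $G_{e'}$, so each summand is infinite, contradicting $H$-finiteness. Hence $e\in Y$.

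Once $\cobo S\subset Y$, every summand above is finite, so the sum is finite and each $Ge\cap\cobo S$ is finite; cocompactness of $T$ modulo $G$ supplies finitely many $G$-orbits of edges at $v$, and therefore $\cobo S$ itself is finite.

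The main obstacle will be rigorously controlling the $H$-finite error from $g$-translates of the geodesic $v\to v_+$ that cross edges of $\cobo S$ other than $e$. The cleanest workaround is to pass to a Cayley graph of $G$: $H$-almost invariance of $X$ is equivalent to its coboundary in that Cayley graph being $H$-finite, and one argues directly that this coboundary surjects onto $\cobo S$ with fiber over $e$ of size $[G_e:H\cap G_e]$, yielding both finiteness and $\cobo S\subset Y$ simultaneously.
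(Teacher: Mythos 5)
Your primary line of argument has exactly the gap you flag at the end, and I don't see how to close it as stated. The $H$-finite set you control is
$X\triangle Xs^{-1}=\{g : gv,\ gv_+\ \text{on opposite sides of }S\}$, which is the set of $g$ for which the geodesic $g[v,v_+]$ meets $\cobo S$ an \emph{odd} number of times. The set you actually need, $\{g:ge\in\cobo S\}$, is the set where the \emph{first} edge of $g[v,v_+]$ is a coboundary edge, and these two sets differ by the $g$'s for which $g[v,v_+]$ hits $\cobo S$ at some other edge. There is no a priori reason this difference is $H$-finite; in fact it is essentially as hard to control as what you are trying to prove, since it is governed by the same coboundary you have not yet shown to be finite. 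Note also that the condition ``$ge\in\cobo S$'' is extremely rigid (it forces $ge$ to be incident to $v$, hence $gv=v$ or $gw=v$), so this set does not have the homogeneous structure your orbit-counting step implicitly assumes; the decomposition into $[G_{e'}:H\cap G_{e'}]$ many $H$-orbits does not come for free from the translation set.

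Your proposed ``cleanest workaround'' is indeed the route the paper takes, and with the right setup it yields both conclusions at once, but the two sentences gloss over the step that makes it work. The paper does not choose an arbitrary Cayley graph: it first replaces the generating set $A$ by one with the property that for each $a\in A$ the open segment $(av,v)$ in $T$ meets no $G$-orbit point of $v$. With that choice, writing $X=\bigsqcup_{e\in\cobo S}Z_e$ (with $v$ as basepoint), no Cayley edge can join a vertex of $Z_e$ to a vertex of $Z_{e'}$ for $e\neq e'$, and one gets the clean disjoint decomposition $\cobo X=\bigsqcup_{e\in\cobo S}\cobo Z_e$ of the Cayley-graph coboundary. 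This is the precise sense in which $\cobo X$ ``maps onto'' $\cobo S$; without the adapted generating set the decomposition fails and the argument collapses. Also, the fiber over $e$ is $\cobo Z_e$, which is an \emph{infinite} set (it is $G_e$-invariant and $G_e$ is $\VPC_n$ with $n\ge1$); what one actually uses is that this fiber is $H$-finite (inherited from $\cobo X$) and $G_e$-invariant, so a single $G_e$-orbit inside it is covered by finitely many $H$-orbits, forcing $[G_e:H\cap G_e]<\infty$, and then the $\VPC_n$ hypothesis upgrades this to commensurability. Describing the fiber as having size $[G_e:H\cap G_e]$ misstates the mechanism. Finally, the paper does not need your preliminary reduction to a literally $H$-invariant $S$ (which itself would require justification, since after the replacement $X$ need not be literally $H$-invariant); the argument runs directly with $H$-finiteness of $\cobo X$.
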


This implies that $[X]\in\calb(T)$, ending the proof when $X$ crosses no $Z_e$.

\begin{proof}[Proof of Lemma \ref{cl_Sigma}]
Let $E$ be the set of edges of $\cobo S$, oriented so that $X=\dunion_{e\in E} Z_e$ (we use $v$ as a basepoint to define $Z_e$).
Let $A$ be a finite generating system of $G$ such that, for all $a\in A$, the open segment $(av,v)$ does not meet the orbit of $v$.
One can construct such a generating sytem from any finite generating system   by iterating the following operation: replace $\{a\}$ by the pair $\{g,g\m a\}$
if   $(av,v)$ contains some $g.v$.

Let $\Gamma$ be the Cayley graph of $(G,A)$.
For any subset $Z\subset G$, denote by $\cobo Z$ 
the set of edges of $\Gamma$ having one endpoint in $Z$ and the other endpoint in $G\setminus Z$.
By our choice of $A$, no edge joins a vertex of $Z_e$ to a vertex of $Z_{e'}$ for $e\neq e'$.
It follows that $\cobo X=\dunion_{e\in E} \cobo Z_{e}$.  

Since $\cobo X$ is $H$-finite, the set $\cobo Z_{e}$ is $H$-finite for each $e\in E$, and $E$ is contained in a finite union of 
  $H$-orbits.  Let $e\in E$. 
Since $\cobo Z_e$ is $G_e$-invariant and $H$-finite, $G_e\cap H$ has finite index in $G_e$.
Since $G_e$ and $H$ are both $\VPC_n$, they are commensurable, so the $H$-orbit of $e$ is finite.
It follows that $E\subset Y$ and that $E$ is finite.
\end{proof}

We now turn to the case when $X$ crosses some $Z_e$'s.
For each $e\in E(T)$, the intersection number $i(Z_e,X)$ is finite \cite{Sco_symmetry}, 
which means that there are only finitely many 
edges $e'$ in the orbit of $e$ such that $Z_{e'}$ crosses $X$.
Since $T/G$ is finite, let $e_1,e_1\m, e_2, e_2\m,\dots,e_n, e_n\m$ be the finite set of oriented edges $e$ such that $Z_e$ crosses $X$
(we denote by $e\mapsto e\m$ the orientation-reversing involution).
Note that $e_i\subset Y$ by \cite[Proposition 13.5]{ScSw_regular}.
Now $X$ is a finite union of sets of the form $X'=X\cap Z_{e_1^{\pm 1}} \cap \dots\cap Z_{e_n^{\pm 1}}$.
Since $X'$ does not cross any $Z_e$, its equivalence class lies in $\calb(T)$ by the argument above and so does $[X]$.
\end{proof}

% {
% \small
%  \bibliographystyle{short}
% \bibliography{published,unpublished}
% } 

 \def\cprime{$'$}

\begin{flushleft}
Vincent Guirardel\\
Institut de Math\'ematiques de Toulouse\\
Universit\'e de Toulouse et CNRS (UMR 5219)\\
118 route de Narbonne\\
F-31062 Toulouse cedex 9\\
France.\\
\emph{e-mail:}\texttt{guirardel@math.ups-tlse.fr}\\[8mm]

Gilbert Levitt\\
Laboratoire de Math\'ematiques Nicolas Oresme\\
Universit\'e de Caen et CNRS (UMR 6139)\\
BP 5186\\
F-14032 Caen Cedex\\
France\\
\emph{e-mail:}\texttt{levitt@math.unicaen.fr}\\
\end{flushleft}

\end{document}